\newcommand{\RR}{{\mathbb R}}
\newcommand{\pp}{{\mathbb P}}
\providecommand{\U}[1]{\protect\rule{.1in}{.1in}}
\definecolor{drot}{rgb}{0.01, 0.28, 1.0}
\begin{document}

\title*{Construction of 2D explicit cubic quasi-interpolating splines in
Bernstein-B\'{e}zier form}
\titlerunning{2D quasi-interpolation in Bernstein-B\'{e}zier form}
\author{D. Barrera, S. Eddargani, M.J. Ib\'{a}\~{n}ez and S. Remogna}
\institute{Domingo Barrera and Mar\'{\i}a Jos\'{e} Ib\'{a}\~{n}ez \at Department of Applied Mathematics, University of Granada, Campus de Fuentenueva s/n, 18071-Granada, Spain, \email{\{dbarrera,mibanez\}@ugr.es}
\and Salah Eddargani  \at Department of Mathematics, University of Rome Tor Vergata, Rome, Italy, \email{eddargani@mat.uniroma2.it} \and Sara Remogna \at University of Torino,  Via Carlo Alberto 10, 10123, Torino, Italy, \email{sara.remogna@unito.it} }
\maketitle

\abstract{In this paper, the construction of $C^{1}$ cubic quasi-interpolants 
on a three-direction mesh of $\RR^{2}$ is addressed. The 
quasi-interpolating splines are defined by directly setting their 
Bernstein-B\'{e}zier coefficients relative to each triangle from point and 
gradient values in order to reproduce the polynomials of the highest possible 
degree. Moreover, additional global properties are required. Finally, we provide some numerical tests confirming the approximation properties.}

\section{Introduction} \label{intro}

In many scientific applications and mathematical problems the approximation of 
functions from their values or some derivatives at given points is present, 
and quasi-interpolation is a simple and useful procedure in this context 
thanks to its particular properties (see e.g. the book \cite{bj} for a general 
overview on this topic). Indeed, the construction of classical approximants, 
e.g. interpolants, often requires the resolution of linear systems, instead 
quasi-interpolants are local approximants avoiding this problem. 

Here we focus on spline quasi-interpolation and we recall there are several 
schemes that allow to represent them (see e.g. the book \cite{ls} and the 
reference therein), for example using compactly supported spanning functions, 
like B-splines or box splines, or using local and stable minimal determining 
sets. Starting from \cite{SZ3,SorokinaZeilfelder2005} and going on with \cite{bcir2,bcir3,bcir4,beir,Proc2019,MMAS, BEIR23}% 
, another local approach has been adopted in the literature and it is based on 
the Bernstein-B\'{e}zier (BB-) representation of polynomials, by setting the 
spline BB-coefficients to appropriate combinations of the given data values, 
by using local portions of the data in such a way that the $C^{1}$ smoothness 
conditions are satisfied as well as the required polynomial reproduction. 
In particular, in \cite{bcir2,bcir3,bcir4} $C^1$ quartic and cubic quasi-interpolants on type-1 triangulations, exact on the space of cubic and quadratic polynomials, respectively, are constructed. In \cite{beir} such a method has been applied for the construction of $C^1$ quadratic quasi-interpolants exact on quadratic polynomials, defined on a uniform triangulation of type-1 endowed with a Powell–Sabin refinement.  In \cite{Proc2019} the method has been modified by combining a quasi-interpolating spline with one step of the so called Modified Buttertly Interpolatory Subdivision Scheme, to construct $C^1$ quartic interpolating splines on regular type-1 triangulations. Moreover, in \cite{MMAS}, quasi-interpolating schemes constructed by using this method have been applied to digital elevation models.

We remark that in the above papers, the BB-coefficients are determined using only the values of the function to be approximated. In this context, in the present paper we propose the construction of $C^1$-cubic Hermite splines on a uniform three-direction triangulation, whose BB-coefficients are determined by the values of the function and its gradient at the vertices of the triangulation and the associated quasi-interpolation operator is exact on quadratic polynomials. The resulting spline (obtained by imposing $C^1$ smoothness and quadratic polynomial reproduction) depends on five parameters that we fix imposing additional properties.

In particular, in Section \ref{Notations_preliminaries} we give notations and preliminaries used in the paper. In Section \ref{cubic} we define the problem and we prove the existence of a 5-parametric family of  spline quasi-interpolants. In Section \ref{parameters} we present some strategies to fix the free parameters and in Section \ref{tests} we provide some numerical tests confirming the approximation properties.

\section{Notations and preliminaries}\label{Notations_preliminaries}

Given a triangulation $\Delta$ of the real plane, a polynomial $p_{d}$ of
degree less than or equal to $d$ can be represented on each triangle $T$
induced by $\Delta$ with vertices $v_{1}=\left(  v_{1,1},v_{1,2}\right)
$, $v_{2}=\left(  v_{2,1},v_{2,2}\right)  $ and $v_{3}=\left(  v_{3,1}%
,v_{3,2}\right)  $ in terms of its Bernstein basis. If $\tau:=\left(  \tau
_{1},\tau_{2},\tau_{3}\right)  $ are the barycentric coordinates with respect
to $T$, defined by the equalities%
\[
\left(  x,y\right)  =\tau_{1}\left(  v_{1,1},v_{1,2}\right)  +\tau
_{2}\left(  v_{2,1},v_{2,2}\right)  +\tau_{3}\left(  v_{3,1},v_{3,2}%
\right)  \quad\text{and}\quad\tau_{1}+\tau_{2}+\tau_{3}=1
\]
%\[
%\left(  x,y\right)  =\lambda_{1}\left(  v_{1,1},v_{1,2}\right)  +\lambda
%_{2}\left(  v_{2,1},v_{2,2}\right)  +\lambda_{3}\left(  v_{3,1},v_{3,2}%
%\right)  \quad\text{and}\quad\lambda_{1}+\lambda_{2}+\lambda_{3}=1
%\]
for $\left(  x,y\right)  \in T$, then%
\begin{equation}
p_{d}\left(  x,y\right)  =\sum_{\left\vert \alpha\right\vert =d}b_{\alpha}%
^{d}B_{\alpha}^{d}\left(  \tau\right)  , \label{p}%
\end{equation}
where $\left\vert \alpha\right\vert :=\alpha_{1}+\alpha_{2}+\alpha_{3}$ stands
for the length of the multi-index $\alpha:=\left(  \alpha_{1},\alpha
_{2},\alpha_{3}\right)  \in\mathbb{N}_{0}^{2}$, and%
\[
B_{\alpha}^{d}\left(  \tau\right)  :=\frac{d!}{\alpha!}\tau^{\alpha}=\frac
{d!}{\alpha_{1}!\alpha_{2}!\alpha_{2}!}\tau_{1}^{\alpha_{1}}\tau_{2}%
^{\alpha_{2}}\tau_{3}^{\alpha_{3}}%
\]
for the Bernstein polynomials of degree $d$ on $T$. The real numbers
$b_{\alpha}^{d}$ are said to be the Bernstein-B\'{e}zier (BB-) coefficients of
$p_{d}$ on $T$. They are related to the called domain points relative to $T$,
which are defined as $\xi_{\alpha}^{d}:=\frac{\alpha_{1}}{d}v_{1}+\frac
{\alpha_{2}}{d}v_{2}+\frac{\alpha_{3}}{d}v_{3}$. It is well-known that the
graph of the surface $z=p\left(  x,y\right)  $ on $T$ lies in the convex hull
of the set $\left\{  \left(  \xi_{\alpha}^{d},b_{\alpha}^{d}\right)
,\left\vert \alpha\right\vert =d\right\}  $ of control points.

We are interested in constructing spline functions on the triangulation $\Delta$,
so it is useful to recall the conditions on the BB-coefficients of its
restrictions to the triangles that guarantee the $C^{r}$ regularity.

Suppose a polynomial $\widetilde{p}_{d}$ of degree $d$ is defined on the
triangle $\widetilde{T}$ of vertices $v_{4}$, $v_{3}$ and $v_{2}$, thus
sharing with $T$ the edge defined by $v_{2}$ and $v_{3}$. Then,%
\begin{equation}
\widetilde{p}_{d}\left(  x,y\right)  =\sum_{\left\vert \alpha\right\vert
=d}\widetilde{b}_{\alpha}^{d}\widetilde{B}_{\alpha}^{d}\left(  \widetilde
{\tau}\right)  ,\ \left(  x,y\right)  \in\widetilde{T}, \label{ptilde}%
\end{equation}
where $\left\{  \widetilde{B}_{\alpha}^{d}\left(  \widetilde{\tau}\right)
,\left\vert \alpha\right\vert =d\right\}  $ is the basis of Bernstein
polynomials of $\mathbb{P}_{d}\left(  \widetilde{T}\right)  $, which are
expressed in terms of the corresponding barycentric coordinates $\widetilde
{\tau}:=\left(  \widetilde{\tau}_{1},\widetilde{\tau}_{2},\widetilde{\tau}%
_{3}\right)  $. Then, the following result holds \cite[Lemma 2.29]{ls}:

\begin{lemma}\label{C0C1}
The polynomials $p$ and $\widetilde{p}$ given in (\ref{p}) and
(\ref{ptilde}), respectively, join with $C^{r}$ smoothness across the edge
defined by $v_{2}$ and $v_{3}$ if%
\[
\widetilde{b}_{n,j,k}=\sum_{\left\vert \beta\right\vert =n}b_{\beta
_{1},k+\beta_{2},j+\beta_{3}}B_{\beta}^{n}\left(  \tau\right)
,\ j+k=d-n,\ n=0,\ldots,r,
\]
where $\tau$ denotes the barycentric coordinates of $v_{4}$ with respect to
$T$.
\end{lemma}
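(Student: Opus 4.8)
The plan is to exploit that both $p$ and $\widetilde p$ are genuine polynomials on the whole plane, so I may re-express $p$ as well in the Bernstein basis of the single triangle $\widetilde T=\langle v_4,v_3,v_2\rangle$. Writing $\widehat b_\alpha$ for the BB-coefficients of $p$ relative to $\widetilde T$, the statement splits into a structural step (the meaning of a $C^r$ contact in BB-language) and a computational step (subdivision): first I would show that the $C^r$ join is equivalent to $\widetilde b_\alpha=\widehat b_\alpha$ for all $\alpha$ with $\alpha_1\le r$, and then I would identify $\widehat b_{n,j,k}$ with the asserted sum.

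For the structural step, the piecewise function equal to $p$ on $T$ and $\widetilde p$ on $\widetilde T$ is $C^r$ across $e:=\langle v_2,v_3\rangle$ if and only if every partial derivative of order $\le r$ of $g:=\widetilde p-p$ vanishes along $e$. Since $g$ is a polynomial and $e$ spans the line $L=\{\widetilde\tau_1=0\}$, this is equivalent to $g$ vanishing to order $r+1$ on $L$, i.e. to the divisibility $\widetilde\tau_1^{\,r+1}\mid g$. Expanding $g=\sum_{|\alpha|=d}(\widetilde b_\alpha-\widehat b_\alpha)\widetilde B_\alpha^d$ and using that $\widetilde B_\alpha^d$ carries the factor $\widetilde\tau_1^{\alpha_1}$, one checks that $\widetilde\tau_1^{\,r+1}\mid g$ holds exactly when $\widetilde b_\alpha=\widehat b_\alpha$ for every $\alpha$ with $\alpha_1\le r$, that is, for $n=0,\dots,r$. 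Hence it only remains to identify $\widehat b_{n,j,k}$.

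For the computational step I would introduce the polar form (blossom) $B$ of $p$, the unique symmetric, affine-in-each-slot map of $d$ arguments with $B[w,\dots,w]=p(w)$; its diagonal restrictions recover the BB-coefficients relative to any triangle, so $\widehat b_{n,j,k}=B[\,\underbrace{v_4,\dots,v_4}_{n},\underbrace{v_3,\dots,v_3}_{j},\underbrace{v_2,\dots,v_2}_{k}\,]$ while $b_{\gamma}=B[\,\underbrace{v_1,\dots}_{\gamma_1},\underbrace{v_2,\dots}_{\gamma_2},\underbrace{v_3,\dots}_{\gamma_3}\,]$. Substituting $v_4=\tau_1 v_1+\tau_2 v_2+\tau_3 v_3$ into each of the $n$ copies of $v_4$ and expanding by multiaffinity distributes the product over the three vertices of $T$; collecting the term in which $v_1,v_2,v_3$ are selected $\beta_1,\beta_2,\beta_3$ times yields the weight $\tfrac{n!}{\beta!}\tau^\beta=B_\beta^n(\tau)$, while the surviving blossom argument has $v_1$ with multiplicity $\beta_1$, $v_2$ with multiplicity $\beta_2+k$ and $v_3$ with multiplicity $\beta_3+j$, i.e. equals $b_{\beta_1,\,k+\beta_2,\,j+\beta_3}$. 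This gives exactly $\widehat b_{n,j,k}=\sum_{|\beta|=n}b_{\beta_1,k+\beta_2,j+\beta_3}B_\beta^n(\tau)$. (Equivalently, one may avoid the blossom and obtain the same identity by iterating the de~Casteljau subdivision step $n$ times toward $v_4$.)

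The delicate points are twofold. The genuine content sits in the structural step: establishing that divisibility by $\widetilde\tau_1^{\,r+1}$ corresponds precisely to the vanishing of the $r+1$ coefficient layers with $\alpha_1\le r$ relies on the (standard but nontrivial) fact that these layers are in invertible correspondence with the cross-derivatives of order $\le r$ along $e$, which is where one actually uses that $\widetilde\tau_1,\widetilde\tau_2,\widetilde\tau_3$ are affinely related. The remaining difficulty is purely bookkeeping: because $\widetilde T$ lists its vertices as $(v_4,v_3,v_2)$ whereas $T$ lists them as $(v_1,v_2,v_3)$, the roles of $v_2$ and $v_3$ are interchanged, and it is this swap that produces the asymmetric index pattern $(k+\beta_2,\,j+\beta_3)$ in the final formula; keeping the correspondence between slot multiplicities and multi-indices straight is the main place an error could creep in.
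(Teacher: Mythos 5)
Your argument is correct. Note, however, that the paper does not prove this lemma at all: it is quoted verbatim from Lai and Schumaker (\cite[Lemma 2.29]{ls}, together with Theorem 2.28 there), so any complete proof is ``different'' from the paper, which simply defers to the literature. Your route --- re-expanding $p$ in the Bernstein basis of $\widetilde{T}$, reducing the $C^{r}$ contact of $g=\widetilde{p}-p$ across the edge to divisibility by $\widetilde{\tau}_{1}^{\,r+1}$ (a polynomial whose derivatives up to order $r$ vanish on a segment has them vanish on the whole supporting line), and then computing the coefficients $\widehat{b}_{n,j,k}$ of $p$ relative to $\widetilde{T}$ by multiaffine expansion of the blossom at $v_{4}=\tau_{1}v_{1}+\tau_{2}v_{2}+\tau_{3}v_{3}$ --- is essentially the standard subdivision/blossoming proof, and the index bookkeeping checks out: for $n=0$ and $n=1$ your formula reproduces exactly (\ref{C0}) and (\ref{C1}). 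Two remarks. First, the statement only asserts sufficiency, so the ``only if'' half of your structural step is not needed, though it is true. Second, the one point you correctly flag as delicate --- that $\widetilde{\tau}_{1}^{\,r+1}\mid g$ is equivalent to the vanishing of the coefficient layers $\alpha_{1}\le r$ --- is where a referee would want a line more: the clean way is to note that the map from the layers $\alpha_{1}=0,\dots,r$ to the transverse derivatives of order $0,\dots,r$ restricted to the edge is triangular with invertible diagonal blocks (each new derivative order brings in exactly one new layer, with a nonzero constant factor, via the directional-derivative formula for Bernstein polynomials); with that sentence added, the proof is complete and self-contained, which is more than the paper itself provides.
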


In particular, with $\tau=\left(  \tau_{1},\tau_{2}\text{,}\tau_{3}\right)  $,
for $C^{1}$ smoothness the equalities%
%\begin{align}
%\widetilde{c}_{0,j,k}  &  =c_{0,k,j},\ j+k=d,\label{C0}\\
%\widetilde{c}_{0,j,k}  &  =\tau_{1}c_{1,k,j}+\tau_{2}c_{0,k+1,j}+\tau
%_{3}c_{0,k,j+1},\ j+k=d-1, \label{C1}%
%\end{align}
\begin{align}
\widetilde{b}_{0,j,k}  &  =b_{0,k,j},\ j+k=d,\label{C0}\\
\widetilde{b}_{0,j,k}  &  =\tau_{1}b_{1,k,j}+\tau_{2}b_{0,k+1,j}+\tau
_{3}b_{0,k,j+1},\ j+k=d-1, \label{C1}%
\end{align}
are required \cite[eq (2.50) in Thm. 2.28 ]{ls}.

In this work, we consider the uniform triangulation $\Delta_{3}$ defined
by the vectors $e_{1}:=\left(  h,h\right)  $, $e_{2}:=\left(  h,-h\right)  $
and $e_{3}:=e_{1}+e_{2}$, with a given $h>0$. It gives rise to vertices
$v_{i,j}:=ie_{1}+je_{2}$,$\ i,j\in\mathbb{Z}$. It also produces two types of
triangles. The first one is $T_{i,j}:=\left[  v_{i,j},v_{i+1,j+1}%
,v_{i+1,j}\right]  $ and the other one $\widetilde{T}_{i,j}:=\left[
v_{i,j},v_{i+1,j+1},v_{i,j+1}\right]  $ (see Fig. \ref{triangulacion}).

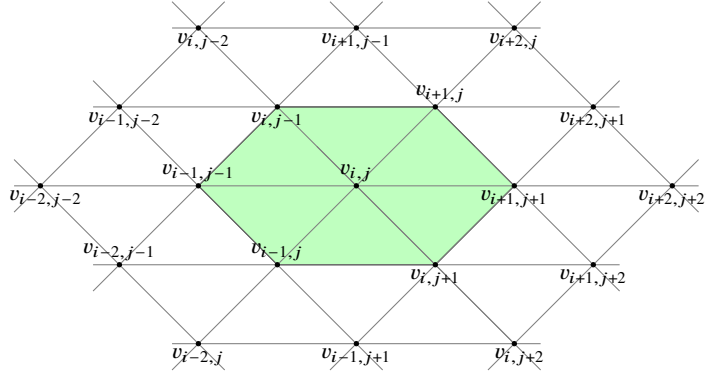
\begin{figure}[ptb]
\centering
\begin{tikzpicture}[scale=0.35]
\path[draw, fill=green!25, opacity=0.15] (7,10)--(10,7)--(16.,7)--(19.,10.)--(16.,13.)--(10,13)--(7,10)--cycle;
%%%%%%%%%%
\draw[gray,thin] (6,4) --(20,4);   %linea horizontal 2%
\draw[gray,thin] (3,7) --(23,7);   %linea horizontal 3%
\draw[gray,thin] (0,10) --(26,10);   %linea horizontal 4%
\draw[gray,thin] (3,13) --(23,13);   %linea horizontal 5%
\draw[gray,thin] (6,16) --(20,16);   %linea horizontal 6%
%%%%%%%%
\draw[gray,thin] (0,9) --(8,17);  %diagonal 1%
\draw[gray,thin] (3,6) --(14,17); %diagonal 2%
\draw[gray,thin] (6,3) --(20,17) ;  %diagonal 3%
\draw[gray,thin] (12,3) --(23,14);%diagonal 4%
\draw[gray,thin] (18,3) --(26,11); %diagonal 5%
%%%%%%%%%%%%%%%%%%%%%%%%%%
\draw[gray,thin] (6,17) --(20,3);  %ndiagonal 1%
\draw[gray,thin] (0,11) --(8,3); %rdiagonal 2%
\draw[gray,thin]  (3,14) --(14,3); %rdiagonal 1bis%
\draw[gray,thin] (12,17) --(23,6); %rdiagonal 3bis%
\draw[gray,thin] (18,17) --(26,9); %rdiagonal 4bis%
\draw[gray,thin] (11.5,11.5) --(19,4);%rdiagonal 4%
\draw[gray,thin] (17.5,11.5) --(19,10); %rdiagonal 5%
%%points nuevos%  j-2%
\draw[fill] (1,10) circle(0.08cm);
\node[black] at (1.2,9.5) {\small $v_{i-2,j-2}$};
\draw[fill] (4,13) circle(0.08cm);
\node[black] at (4.2,12.5) {\small $v_{i-1,j-2}$};
\draw[fill] (7,16) circle(0.08cm);
\node[black] at (7.2,15.5) {\small $v_{i,j-2}$};
%%%%%%%%%%%%%%%%%%%%%%  j-1  %%%nuevos%
\draw[fill] (4,7) circle(0.08cm);
\node[black] at (4.,7.5) {\small $v_{i-2,j-1}$};
\draw[fill] (7,10) circle(0.08cm);
\node[black] at (7.,10.5) {\small $v_{i-1,j-1}$};
\draw[fill] (10,13) circle(0.08cm);
\node[black] at (10.,12.5) {\small $v_{i,j-1}$};
\draw[fill] (13,16) circle(0.08cm);
\node[black] at (13.,15.5) {\small  $v_{i+1,j-1}$};
%%%%%%%%%%%%%%%%diagonal de la 2º j   %%%%%
\draw[fill] (7,4) circle(0.08cm);
\node[black] at (7.,3.5) {\small $v_{i-2,j}$};
\draw[fill] (10,7) circle(0.08cm);
\node[black] at (10.,7.5) {\small $v_{i-1,j}$};
\draw[fill] (13,10) circle(0.08cm);
\node[black] at (13.,10.5) {\small  $v_{i,j}$};
\draw[fill] (16,13) circle(0.08cm);
\node[black] at (16.2,13.5) {\small $v_{i+1,j}$};
\draw[fill] (19,16) circle(0.08cm);
\node[black] at (19.,15.5) {\small $v_{i+2,j}$};
%%%%%%%%%%%%%%%%  j+1%%%
\draw[fill] (13,4) circle(0.08cm);
\node[black] at (13.,3.5) {\small $v_{i-1,j+1}$};
\draw[fill] (16,7) circle(0.08cm);
\node[black] at (16.,6.5) {\small $v_{i,j+1}$};
\draw[fill] (19,10) circle(0.08cm);
\node[black] at (19.,9.5) {\small $v_{i+1,j+1}$};
\draw[fill,] (22,13) circle(0.08cm);
\node[black] at (22.,12.5) {\small $v_{i+2,j+1}$};
%%%%%%%  j+2 %%%%%%%%
\draw[fill] (19,4) circle(0.08cm);
\node[black] at (19.2,3.5) {\small $v_{i,j+2}$};
\draw[fill] (22,7) circle(0.08cm);
\node[black] at (22.,6.5) {\small $v_{i+1,j+2}$};
\draw[fill] (25,10) circle(0.08cm);
\node[black] at (25,9.5) {\small $v_{i+2,j+2}$};
%%%%%%%%%%%%%%%%%%
\end{tikzpicture}
\caption{The triangulation $\Delta_{3}$ and the hexagon $H_{i,j}$ centered
at $v_{i,j}$ defined by its six closest vertices.}%
\label{triangulacion}%
\end{figure}

The quasi-interpolating splines will be constructed in the space
\[
S_3^1\left(  \Delta_{3}\right)  :=\left\{  s\in C^{1}\left(  R^{2}\right)
:s_{\mid T}\in P_{3}\text{ for all }T\in\Delta_{3}\right\}  .
\]
According to (\ref{p}), their BB-coefficients on each triangle of
$\Delta_{3}$ will be directly setting. Given $s\in S_3^1 \left(
\Delta_{3}\right)  $, its restriction to a specific triangle $T$
(equal to $T_{i,j}$ or $\widetilde{T}_{i,j}$) can be written as
\[
s_{\mid T}=%
%TCIMACRO{\dsum \limits_{\left\vert \alpha\right\vert =3}}%
%BeginExpansion
{\displaystyle\sum\limits_{\left\vert \alpha\right\vert =3}}
%EndExpansion
b_{\alpha}^{T}B_{\alpha}^{T},
\]
where a superscript is used to show that the Bernstein polynomials depend on
the triangle considered and the BB-coefficients will be defined from the
available information on the function to be approximated.

In each triangle a cubic spline is uniquely determined by ten BB-coefficients,
linked to ten domain points. The subset $D$ consisting of the domain points of
all the triangles can be written as $D=\bigcup_{i,j}D_{i,j}$, where%
\[
D_{i,j}:=\left\{  v_{i,j},c_{i,j},\widetilde{c}_{i,j}\right\}  \cup\left\{
u_{i,j}^{k,m},k,m\in\left\{  -1,0,1\right\}  ,k+m\neq0\right\}  ,
\]
$c_{i,j}$ and $\widetilde{c}_{i,j}$ being the barycenters of $T_{i,j}$ and
$\widetilde{T}_{i,j}$, respectively, and%
\[
u_{i,j}^{k,m}:=\frac{1}{3}\left(  2v_{i,j}+v_{i+k,j+m}\right)  .
\]
This partition is essential for the construction to be proposed. As the
triangulation is uniform, it will be sufficient to define the BB-coefficients
associated with the points in $D_{i,j}$. Fig. \ref{domainpoints} shows the
domain points linked to the BB-coefficients that determine a cubic spline in
the triangles $T_{i,j}$ and $\widetilde{T}_{i,j}$.

\begin{figure}[ptb]
\centering
\begin{tikzpicture}[scale=0.85]
\foreach \x in {1,3,5,7}
\draw[fill,black] (\x,4) circle (0.05cm);
\foreach \x in {2,4,6}
\draw[fill,black] (\x,5) circle (0.05cm);
\foreach \x in {2,4,6}
\draw[fill,black] (\x,3) circle (0.05cm);
\foreach \x in {3,5}
\draw[fill,black] (\x,2) circle (0.05cm);
\foreach \x in {3,5}
\draw[fill,black] (\x,6) circle (0.05cm);
\draw[fill,black] (4,1) circle (0.05cm);
\draw[fill,black] (4,7) circle (0.05cm);
\draw[gray, thin] (1,4) -- (4,1) -- (7,4) -- (4,7) -- (1,4) --cycle;
\draw[gray, thin] (1,4) -- (7,4);
\node[black] at (7.1,4.3) {\small {$v_{i+1,j+1}$}};%OK
\node[black] at (5,4.3) {\small {$ u_{i+1,j+1}^{-1,-1}$}};
\node[black] at (3,4.3) {\small {$u_{i,j}^{1,1}$}};
\node[black] at (1,4.3) {\small  {$v_{i,j}$}};
\node[black] at (2,5.3) {\small {$u_{i,j}^{1,0}$}};
\node[black] at (4,5.3) {\small {$ c_{i,j}$}};
\node[black] at (6,5.3) {\small {$u_{i+1,j+1}^{0,-1}$}};
\node[black] at (3,6.3)  {\small {$u_{i+1,j}^{-1,0}$}};
\node[black] at (5,6.3) {\small {$u_{i+1,j}^{0,1}$}};
\node[black] at (4.,7.2) {\small {$v_{i+1,j}$}};
%%%%%%%%%%%debajo%%%%
\node[black] at (2,3.3) {\small {$u_{i,j}^{0,1}$}};
\node[black] at (4,3.3) {\small {$ \widetilde{c}_{i,j}$}};
\node[black] at (6,3.3)  {\small {$u_{i+1,j+1}^{-1,0}$}};
\node[black] at (3,2.3) {\small {$u_{i,j+1}^{0,1}$}};
\node[black] at (5,2.3)  {\small {$u_{i,j+1}^{1,0}$}};
\node[black] at (4,1.3) {\small {$v_{i,j+1}$}};
\node[red] at (1.35,5.8) {$T_{i,j}$};
\node[red] at (1.35,2.5) {$\widetilde{T}_{i,j}$};
\end{tikzpicture}
\caption{The domain points in $T_{i,j}$ and $\widetilde{T}_{i,j}$.}%
\label{domainpoints}%
\end{figure}
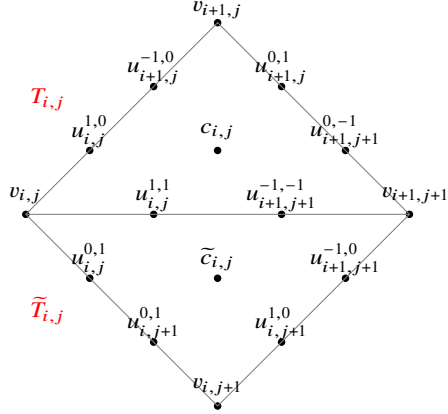

\section{$C^{1}$ cubic Hermite quasi-interpolation}\label{cubic}

In this section, we define quasi-interpolating splines $Qf\in S_3^1\left(  \Delta_{3}\right)  $ to a given function $f\in C^{1}\left(
\RR^{2}\right)  $ by assuming that the values of $f$ and its gradient
at the vertices are known. The BB-coefficients of $Qf$ are set on
each triangle $T$ as follows:
\begin{align}
Qf_{\mid T_{i,j}}  &  =V_{i,j}B_{3,0,0}^{T_{i,j}}+U_{i,j}^{1,1}B_{2,1,0}%
^{T_{i,j}}+U_{i,j}^{1,0}B_{2,0,1}^{T_{i,j}}+U_{i+1,j+1}^{-1,-1}B_{1,2,0}%
^{T_{i,j}}\label{Q_T}\\
&  +C_{i,j}B_{1,1,1}^{T_{i,j}}+U_{i+1,j}^{-1,0}B_{1,0,2}^{T_{i,j}}%
+V_{i+1,j+1}B_{0,3,0}^{T_{i,j}}+U_{i+1,j+1}^{0,-1}B_{0,2,1}^{T_{i,j}%
}\nonumber\\
&  +U_{i+1,j}^{0,1}B_{0,1,2}^{T_{i,j}}+V_{i+1,j}B_{0,0,3}^{T_{i,j}%
},\nonumber\\
Qf_{\mid\widetilde{T}_{i,j}}  &  =V_{i,j+1}B_{3,0,0}^{\widetilde{T}_{i,j}%
}+U_{i,j+1}^{1,0}B_{2,1,0}^{\widetilde{T}_{i,j}}+U_{i,j+1}^{0,1}%
B_{2,0,1}^{\widetilde{T}_{i,j}}+U_{i+1,j+1}^{-1,0}B_{1,2,0}^{\widetilde
{T}_{i,j}}\nonumber\\
&  +\widetilde{C}_{i,j}B_{1,1,1}^{\widetilde{T}_{i,j}}+U_{i,j}^{0,1}%
B_{1,0,2}^{\widetilde{T}_{i,j}}+V_{i+1,j+1}B_{0,3,0}^{\widetilde{T}_{i,j}%
}+U_{i+1,j+1}^{-1,-1}B_{0,2,1}^{\widetilde{T}_{i,j}}\nonumber\\
&  +U_{i,j}^{1,1}B_{0,1,2}^{\widetilde{T}_{i,j}}+V_{i,j}B_{0,0,3}%
^{\widetilde{T}_{i,j}}.\nonumber
\end{align}
Note that the BB-coefficients have been named as their corresponding domain
points using capital letters.

Expressions (\ref{Q_T}) involve the BB-coefficients associated with four
vertices, ten domain points of type $u$ and those of the two barycentres.
Taking into account that $\Delta_{3}$ is a uniform partition, it will be
sufficient to define the BB-coefficients of the domain points appearing in
$D_{i,j}$. They will be linear combinations of the values of $f$ and its first
order partial derivatives $\partial_{1,0}f$ and $\partial_{0,1}f$ at the seven
vertices in the hexagon $H_{i,j}$ (see Fig. \ref{triangulacion}). For example,
the BB-coefficient associated with the domain point $v_{i,j}$ has the
following form:

\begin{align}
V_{i,j} &  =\alpha_{0,0,0}\ f\left(  v_{i,j}\right)  +\alpha_{0,0,1}\ f\left(
v_{i+1,j+1}\right)  +\alpha_{0,0,2}\ f\left(  v_{i+1,j}\right)  +\alpha
_{0,0,3}\ f\left(  v_{i,j-1}\right)  \nonumber\\
&  +\alpha_{0,0,4}\ f\left(  v_{i-1,j-1}\right)  +\alpha_{0,0,5}\ f\left(
v_{i-1,j}\right)  +\alpha_{0,0,6}\ f\left(  v_{i,j+1}\right)  \nonumber\\
&  +\alpha_{1,0,0}\ \partial_{1,0}f\left(  v_{i,j}\right)  h+\alpha
_{1,0,1}\ \partial_{1,0}f\left(  v_{i+1,j+1}\right)  h+\alpha_{1,0,2}%
\ \partial_{1,0}f\left(  v_{i+1,j}\right)  h\nonumber\\
&  +\alpha_{1,0,3}\ \partial_{1,0}f\left(  v_{i,j-1}\right)  h+\alpha
_{1,0,4}\ \partial_{1,0}f\left(  v_{i-1,j-1}\right)  h+\alpha_{1,0,5}%
\ \partial_{1,0}f\left(  v_{i-1,j}\right)  h\label{Vij}\\
&  +\alpha_{1,0,6}\ \partial_{1,0}f\left(  v_{i,j+1}\right)  h+\alpha
_{0,1,0}\ \partial_{0,1}f\left(  v_{i,j}\right)  h+\alpha_{0,1,1}%
\ \partial_{0,1}f\left(  v_{i+1,j+1}\right)  h\nonumber\\
&  +\alpha_{0,1,2}\ \partial_{0,1}f\left(  v_{i+1,j}\right)  h+\alpha
_{0,1,3}\ \partial_{0,1}f\left(  v_{i,j-1}\right)  h+\alpha_{0,1,4}%
\ \partial_{0,1}f\left(  v_{i-1,j-1}\right)  h\nonumber\\
&  +\alpha_{0,1,5}\ \partial_{0,1}f\left(  v_{i-1,j}\right)  h+\alpha
_{0,1,6}\ \partial_{0,1}f\left(  v_{i,j+1}\right)  h.\nonumber
\end{align}

This expression can be simplified if three \textit{masks} $\alpha
_{0,0}:=\left(  \alpha_{0,0,\ell}\right)  _{0\leq\ell\leq 6}$, $\alpha
_{1,0}:=\left(  \alpha_{1,0,\ell}\right)  _{0\leq\ell\leq 6}$ and $\alpha
_{0,1}:=\left(  \alpha_{0,1,\ell}\right)  _{0\leq\ell\leq 6}$ are introduced,
as well the notation $g_{i,j}:=\left(  g\left(  v_{i,j}\right)  ,g\left(
v_{i+1,j+1}\right)  ,g\left(  v_{i+1,j}\right)  ,g\left(  v_{i,j-1}\right)
,g\left(  v_{i-1,j-1}\right)  ,g\left(  v_{i-1,j}\right)  ,g\left(
v_{i,j+1}\right)  \right) $ is introduced for a given function $g$. Thus,
equality (\ref{Vij}) can be written as%
\begin{equation}
V_{i,j}=\alpha_{0,0}\ f_{i,j}+\alpha_{1,0}\ {h} \ \partial_{1,0}f_{i,j}+\alpha
_{0,1}\ {h} \ \partial_{0,1}f_{i,j}. \label{Vijmod}%
\end{equation}
Similarly, for $k,m\in\left\{  -1,0,1\right\}  $ such that $k+m\neq0$, we
write%
\begin{equation}
U_{i,j}^{k,m}=\beta_{0,0}^{k,m}\ f_{i,j}+\beta_{1,0}^{k,m}\ {h} \ \partial_{1,0}%
f_{i,j}+\beta_{0,1}^{k,m}\ {h} \ \partial_{0,1}f_{i,j}. \label{Uij}%
\end{equation}
Finally, for the BB-coefficients $C_{i,j}$ and $\widetilde{C}_{i,j}$ relative
to the barycenters, we write%
\begin{equation}
C_{i,j}=\gamma_{0,0}\ f_{i,j}+\gamma_{1,0}\ {h} \ \partial_{1,0}f_{i,j}+\gamma
_{0,1}\ {h} \ \partial_{0,1}f_{i,j} \label{Bij}%
\end{equation}
and%
\begin{equation}
\widetilde{C}_{i,j}=\widetilde{\gamma}_{0,0}\ f_{i,j}+\widetilde{\gamma}%
_{1,0}\ {h} \ \partial_{1,0}f_{i,j}+\widetilde{\gamma}_{0,1}\ {h} \ \partial_{0,1}\ f_{i,j}.
\label{Bijtilde}%
\end{equation}
All masks $\alpha_{0,0}$, $\alpha_{1,0}$, $\alpha_{0,1}$, $\beta_{0,0}^{k,m}$,
$\beta_{1,0}^{k,m}$, $\beta_{0,1}^{k,m}$, $\gamma_{0,0}$, $\gamma_{1,0}$,
$\gamma_{0,1}$, $\widetilde{\gamma}_{0,0}$, $\widetilde{\gamma}_{1,0}$ and
$\widetilde{\gamma}_{0,1}$ must be computed in order to produce a $C^{1}$
cubic quasi-interpolant $Qf$. The previous local and linear construction results in the quasi-interpolation operator $\mathcal{Q}:C^{1}\left(  \RR^{2}\right)  \longrightarrow
S_3^1\left(  \Delta_{3}\right)  $ defined by $\mathcal{Q}[f]:=Qf$. It can only reproduce $\mathbb{P}_2$ since the order of approximation of $S_3^1\left(  \Delta_{3}\right)$  is only three \cite{deBoorJia}. This will be the exactness required of the operator.

As far as the regularity of $Qf$ is concerned, since the triangulation is
uniform, it is sufficient to impose it on the three edges emanating from the
vertex $v_{0,0}$. Therefore, conditions (\ref{C0}) and (\ref{C1}) must be
satisfied. The former are automatically satisfied by construction, so the
class $C^{1}$ must be imposed by requiring the fulfillment of (\ref{C1}). In
the case of the edge $\left[  v_{0,0},v_{1,1}\right]  $, the values $\tau_{1}
$, $\tau_{2} $ and $\tau_{3} $ to be used correspond to the barycentric
coordinates of the vertex $v_{0,1}$ with respect to the triangle $T_{0,0}$,
which are $\left(  1,1,-1\right)  $. For the edge $\left[  v_{0,0}%
,v_{1,0}\right]  $, the coordinates of $v_{0,-1}$ with respect to $T_{0,0}$
are also equal to $\left(  1,1,-1\right)  $. The same result holds for the
barycentric coordinates of $v_{-1,0}$ with respect to $T_{0,0}$, needed by the
$C^{1}$ continuity across the edge $\left[  v_{0,0},v_{-1,0}\right]  $.

\begin{proposition}$Qf$ is $C^{1}$ continuous if and only if%
\begin{equation}%
\begin{small}
\begin{array}
[c]{rlrlll}%
{\small U}_{0,0}^{0,-1}+{\small U}_{0,0}^{1,1}= & {\small V}_{0,0}%
+{\small U}_{0,0}^{1,1}, & {\small \widetilde{C} _{0,-1}}%
+{\small C_{0,0}}= & {\small U}_{0,0}^{1,0}+{\small U}_{1,0}^{-1,0}, &
{\small U}_{1,0}^{-1,1}+{\small U}_{1,0}^{0,1}= & {\small U}_{1,0}%
^{-1,0}+{\small V}_{1,0},\\
{\small V}_{0,0}+{\small U}_{0,0}^{1,1}= & {\small U}_{0,0}^{1,0}%
+{\small U}_{0,0}^{0,1}, & {\small C_{0,0}}%
+{\small \widetilde {C}_{0,0}}= & {\small U}_{0,0}^{1,1}+{\small U}%
_{1,1}^{-1,-1}, & {\small U}_{1,1}^{-1,-1}+{\small V}_{1,1}= & {\small U}%
_{1,1}^{0,-1}+{\small U}_{1,1}^{-1,0},\\
{\small U}_{0,0}^{1,1}+{\small U}_{0,0}^{-1,0}= & {\small V}_{0,0}%
+{\small U}_{0,0}^{0,1}, & \ {\small C_{-1,0}}%
+{\small \widetilde{C}_{0,0}}= & {\small U}_{0,0}^{0,1}+{\small U}%
_{0,1}^{0,1}, & {\small U}_{0,1}^{0,1}+{\small V}_{0,1}= & {\small U}%
_{0,1}^{-1,-1}+{\small U}_{0,1}^{1,0}.
\end{array}
\end{small}
\label{eqsC1}%
\end{equation}
\end{proposition}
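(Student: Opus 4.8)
The plan is to obtain the nine equations (\ref{eqsC1}) as a direct transcription of the $C^{1}$ smoothness conditions (\ref{C1}) of Lemma \ref{C0C1}, applied edge by edge to $\Delta_{3}$. First I would dispose of the $C^{0}$ part (\ref{C0}): reading off (\ref{Q_T}) one checks that every domain point lying on a shared edge receives the \emph{same} named coefficient ($V$ or $U$) from both triangles incident to that edge (for the edge $[v_{0,0},v_{1,1}]$, for instance, both $T_{0,0}$ and $\widetilde{T}_{0,0}$ assign $V_{0,0},U_{0,0}^{1,1},U_{1,1}^{-1,-1},V_{1,1}$ along it), so (\ref{C0}) holds identically and only (\ref{C1}) remains. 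Next, since both $\Delta_{3}$ and the assignment (\ref{Q_T}) are invariant under the lattice translations, every interior edge is a translate of one of the three edges emanating from $v_{0,0}$ singled out above, and its $C^{1}$ condition is the corresponding translate; hence it suffices to enforce (\ref{C1}) across those three edges.

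For a fixed edge I would apply (\ref{C1}) with $d=3$ and $n=1$, so that the three sub-conditions run over $(j,k)\in\{(2,0),(1,1),(0,2)\}$; three edges times three sub-conditions produce exactly the $3\times 3$ array of nine equations in (\ref{eqsC1}). The barycentric coordinates of the relevant opposite vertex have already been computed to equal $(1,1,-1)$ (up to the ordering of the vertices of the triangle playing the role of $T$). Substituting these weights into (\ref{C1}) collapses the weighted sum: the two coefficients weighted by $+1$ stay on one side, and moving the single $-1$-weighted coefficient across the equality turns the relation into a balanced two-terms-equal-two-terms identity, which is precisely the shape of every entry of (\ref{eqsC1}).

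The remaining, and genuinely delicate, step is purely organizational: converting each such identity into a relation among the capital-letter coefficients. The indexing in (\ref{Q_T}) is tied to a fixed local vertex order for each $T_{i,j}$ and $\widetilde{T}_{i,j}$, and for a given edge this order generally does \emph{not} present the shared edge as the pair $[v_{2},v_{3}]$ required by Lemma \ref{C0C1}. So for each edge I would first decide which incident triangle plays the lemma's $T$ and which its $\widetilde{T}$, then apply the cyclic permutation of barycentric indices aligning its standard numbering with $(v_{1},v_{2},v_{3})$, and only then read the coefficients $b_{\alpha}$, $\widetilde{b}_{\alpha}$ off (\ref{Q_T}). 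For the edge $[v_{0,0},v_{1,1}]$ one takes $T=\widetilde{T}_{0,0}$ and $\widetilde{T}=T_{0,0}$, and the $(j,k)=(1,1)$ identity then reads $C_{0,0}+\widetilde{C}_{0,0}=U_{0,0}^{1,1}+U_{1,1}^{-1,-1}$, the central entry of the second row of (\ref{eqsC1}). The hard part will be carrying out this identification consistently for all nine cases, with the correct index permutations and no sign slips; once done, the equivalence follows, the converse being immediate, since if (\ref{eqsC1}) holds then all translated conditions hold and $Qf\in C^{1}(\RR^{2})$.
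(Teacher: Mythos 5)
Your proposal is correct and follows essentially the same route as the paper: the authors likewise observe that the $C^{0}$ conditions (\ref{C0}) hold automatically by construction, reduce to the three edges emanating from $v_{0,0}$ by uniformity of $\Delta_{3}$, and obtain the nine relations by applying (\ref{C1}) with the barycentric coordinates $(1,1,-1)$, delegating the coefficient bookkeeping to Fig.~\ref{dibujoC1} exactly as you propose to do by hand. Your worked example for the edge $\left[v_{0,0},v_{1,1}\right]$ reproduces the paper's central equation $C_{0,0}+\widetilde{C}_{0,0}=U_{0,0}^{1,1}+U_{1,1}^{-1,-1}$, so the two arguments match.
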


\begin{proof}
Equations (\ref{eqsC1}) are the result of applying the equalities (\ref{C0}) and (\ref{C1}) taking into account the domain points involved (see Fig. \ref{dibujoC1}).
\end{proof}

\begin{figure}[ptb]
\centering
\begin{tikzpicture}[scale=0.7]
\path[draw, fill=green!15, opacity=0.35] (2,7)--(4,7)--(5.,6)--(3.,6.)--cycle;
\path[draw, fill=green!15, opacity=0.35] (1,6)--(3,6)--(4,5)--(2,5.)--cycle;
\path[draw, fill=green!15, opacity=0.35] (0,5)--(2,5)--(3,4)--(1,4.)--cycle;
\path[draw, fill=yellow!15, opacity=0.35] (2,1)--(4,1)--(5.,2)--(3.,2.)--cycle;
\path[draw, fill=yellow!15, opacity=0.35] (1,2)--(3,2)--(4,3)--(2,3.)--cycle;
\path[draw, fill=yellow!15, opacity=0.35] (0,3)--(2,3)--(3,4)--(1,4.)--cycle;
\foreach \x in {1,3,5,7}
\draw[fill,black] (\x,4) circle (0.05cm);
\foreach \x in {2,4,6}
\draw[fill,black] (\x,5) circle (0.05cm);
\foreach \x in {2,4,6}
\draw[fill,black] (\x,3) circle (0.05cm);
\foreach \x in {3,5}
\draw[fill,black] (\x,2) circle (0.05cm);
\foreach \x in {3,5}
\draw[fill,black] (\x,6) circle (0.05cm);
\draw[fill,black] (4,1) circle (0.05cm);
\draw[fill,black] (4,7) circle (0.05cm);
\draw[gray, thin] (1,4) -- (4,1) -- (7,4) -- (4,7) -- (1,4) --cycle;
\draw[gray, thin] (1,4) -- (7,4);
\draw[gray, ultra thin] (0,5) -- (4,5);
\draw[gray, ultra thin] (0,3) -- (4,3);
\draw[gray, ultra thin] (1,2) -- (5,2);
\draw[gray, ultra thin] (1,6) -- (5,6);
\draw[gray, ultra thin] (2,1) -- (4,1);
\draw[gray, ultra thin] (2,1) -- (6,5);
\draw[gray, ultra thin] (1,2) -- (5,6);
\draw[gray, ultra thin] (0,3) -- (4,7);
\draw[gray, ultra thin] (2,7) -- (6,3);
\draw[gray, ultra thin] (1,6) -- (5,2);
\draw[gray, ultra thin] (0,5) -- (4,1);
\draw[gray, ultra thin] (1,2) -- (5,2);
\draw[gray, ultra thin] (2,7) -- (4,7);
\draw[gray, ultra thin] (1,6) -- (3,6);
\draw[gray, ultra thin] (2,3) -- (2,3);
\draw[gray, ultra thin] (1,2) -- (3,2);
\draw[gray, ultra thin] (2,1) -- (4,1);
%%%%%%%%%%%%%%%%%%%%OJO %%
\draw[fill,black] (2,1) circle (0.04cm);
\node[black] at (1.6,1) {\small {$u_{i,j+1}^{-1,-1}$}};%
\draw[fill,black] (1,2) circle (0.04cm);
\node[black] at (0.6,2) {\small {$c_{i+1,j}$}};%
\draw[fill,black] (0,3) circle (0.04cm);
\node[black] at (0.6,3) {\small {$u_{i,j}^{-1,0}$}};%
%%%%%%%%%%%%%%%%%
\draw[fill,black] (0,5) circle (0.04cm);
\node[black] at (0.6,5) {\small {$u_{i,j}^{0,-1}$}};%
\draw[fill,black] (1,6) circle (0.04cm);
\node[black] at (0.6,5.85) {\small {$\widetilde{c}_{i,j-1}$}};%
\draw[fill,black] (2,7) circle (0.04cm);
\node[black] at (1.6,7) {\small {$u_{i+1,j}^{-1,-1}$}};%
%%%%%%%%%%%%%%
\node[black] at (7.1,4.3) {\small {$v_{i+1,j+1}$}};%OK
\node[black] at (5,4.3) {\small {$ u_{i+1,j+1}^{-1,-1}$}};
\node[black] at (3,4.3) {\small {$u_{i,j}^{1,1}$}};
\node[black] at (1,4.3) {\small {$v_{i,j}$}};
\node[black] at (2,5.3) {\small {$u_{i,j}^{1,0}$}};
\node[black] at (4,5.3) {\small {$ c_{i,j}$}};
\node[black] at (6,5.3) {\small {$u_{i+1,j+1}^{0,-1}$}};
\node[black] at (3,6.3)  {\small {$u_{i+1,j}^{-1,0}$}};
\node[black] at (5,6.3) {\small {$u_{i+1,j}^{0,1}$}};
\node[black] at (4.,7.2) {\small {$v_{i+1,j}$}};
%%%%%%%%%%%debajo%%%%
\node[black] at (2,3.3) {\small {$u_{i,j}^{0,1}$}};
\node[black] at (4,3.3) {\small {$ \widetilde{c}_{i,j}$}};
\node[black] at (6,3.3)  {\small {$u_{i+1,j+1}^{-1,0}$}};
\node[black] at (3,2.3) {\small {$u_{i,j+1}^{0,1}$}};
\node[black] at (5,2.3)  {\small {$u_{i,j+1}^{1,0}$}};
\node[black] at (4,1.3) {\small {$v_{i,j+1}$}};
%%%%%%%%%arrriba izquierda%%
%%%%%%%%%abajo izquierda%
%%%%%%%%%%%%%%
\end{tikzpicture}
%%%%%%%%%%%%%
\hspace{0.25cm}
%%%%%%%%%%%%%
\begin{tikzpicture}[scale=0.7]
\path[draw, fill=gray!15, opacity=0.35] (1,4)--(2,5)--(3.,4.)--(2.,3)--cycle;
\path[draw, fill=gray!15, opacity=0.35] (3,4)--(4,5)--(5.,4.)--(4.,3)--cycle;
\path[draw, fill=gray!15, opacity=0.35] (5,4)--(6,5)--(7.,4.)--(6.,3)--cycle;
\foreach \x in {1,3,5,7}
\draw[fill,black] (\x,4) circle (0.05cm);
\foreach \x in {2,4,6}
\draw[fill,black] (\x,5) circle (0.05cm);
\foreach \x in {2,4,6}
\draw[fill,black] (\x,3) circle (0.05cm);
\foreach \x in {3,5}
\draw[fill,black] (\x,2) circle (0.05cm);
\foreach \x in {3,5}
\draw[fill,black] (\x,6) circle (0.05cm);
\draw[fill,black] (4,1) circle (0.05cm);
\draw[fill,black] (4,7) circle (0.05cm);
\draw[gray, thin] (1,4) -- (4,1) -- (7,4) -- (4,7) -- (1,4) --cycle;
\draw[gray, thin] (1,4) -- (7,4);
\draw[gray, ultra thin] (0,5) -- (4,5);
\draw[gray, ultra thin] (0,3) -- (4,3);
\draw[gray, ultra thin] (1,2) -- (5,2);
\draw[gray, ultra thin] (1,6) -- (5,6);
\draw[gray, ultra thin] (2,1) -- (4,1);
\draw[gray, ultra thin] (2,1) -- (6,5);
\draw[gray, ultra thin] (1,2) -- (5,6);
\draw[gray, ultra thin] (0,3) -- (4,7);
\draw[gray, ultra thin] (2,7) -- (6,3);
\draw[gray, ultra thin] (1,6) -- (5,2);
\draw[gray, ultra thin] (0,5) -- (4,1);
\draw[gray, ultra thin] (1,2) -- (5,2);
\draw[gray, ultra thin] (2,7) -- (4,7);
\draw[gray, ultra thin] (1,6) -- (3,6);
\draw[gray, ultra thin] (2,3) -- (2,3);
\draw[gray, ultra thin] (1,2) -- (3,2);
\draw[gray, ultra thin] (2,1) -- (4,1);
%%%%%%%%%%%%%%%%%%%%%%
\draw[fill,black] (2,1) circle (0.04cm);
\node[black] at (1.6,1) {\small {$u_{i,j+1}^{-1,-1}$}};%
\draw[fill,black] (1,2) circle (0.04cm);
\node[black] at (0.6,2) {\small {$c_{i+1,j}$}};%
\draw[fill,black] (0,3) circle (0.04cm);
\node[black] at (0.6,3) {\small {$u_{i,j}^{-1,0}$}};%
%%%%%%%%%%%%%%%%%
\draw[fill,black] (0,5) circle (0.04cm);
\node[black] at (0.6,5) {\small {$u_{i,j}^{0,-1}$}};%
\draw[fill,black] (1,6) circle (0.04cm);
\node[black] at (0.6,6) {\small {$\widetilde{c}_{i,j-1}$}};%
\draw[fill,black] (2,7) circle (0.04cm);
\node[black] at (1.6,7) {\small {$u_{i+1,j}^{-1,-1}$}};%
%%%%%%%%%%%%%%
\node[black] at (7.1,4.3) {\small {$v_{i+1,j+1}$}};%OK
\node[black] at (5,4.3) {\small {$ u_{i+1,j+1}^{-1,-1}$}};
\node[black] at (3,4.3) {\small {$u_{i,j}^{1,1}$}};
\node[black] at (1,4.3) {\small  {$v_{i,j}$}};
\node[black] at (2,5.3) {\small {$u_{i,j}^{1,0}$}};
\node[black] at (4,5.3) {\small {$ c_{i,j}$}};
\node[black] at (6,5.3) {\small {$u_{i+1,j+1}^{0,-1}$}};
\node[black] at (3,6.3)  {\small {$u_{i+1,j}^{-1,0}$}};
\node[black] at (5,6.3) {\small {$u_{i+1,j}^{0,1}$}};
\node[black] at (4.,7.2) {\small {$v_{i+1,j}$}};
%%%%%%%%%%%debajo%%%%
\node[black] at (2,3.3) {\small {$u_{i,j}^{0,1}$}};
\node[black] at (4,3.3) {\small {$ \widetilde{c}_{i,j}$}};
\node[black] at (6,3.3)  {\small {$u_{i+1,j+1}^{-1,0}$}};
\node[black] at (3,2.3) {\small {$u_{i,j+1}^{0,1}$}};
\node[black] at (5,2.3)  {\small {$u_{i,j+1}^{1,0}$}};
\node[black] at (4,1.3) {\small {$v_{i,j+1}$}};
\end{tikzpicture}
\caption{The conditions equivalent to the $C^{1}$ smoothness of $Qf$.}%
\label{dibujoC1}%
\end{figure}
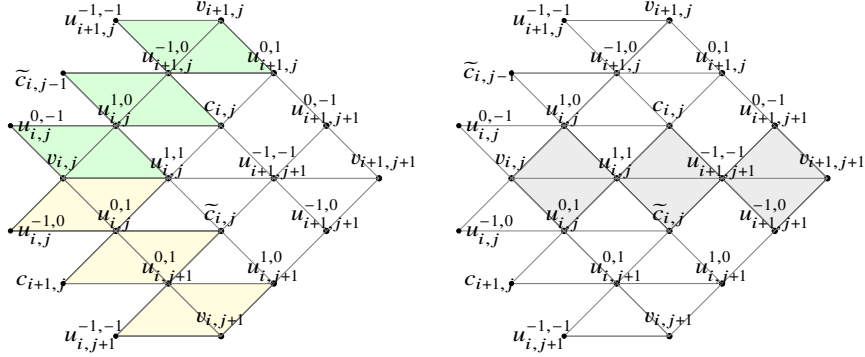

Each of the above nine equalities corresponds to a linear functional whose
action on an arbitrary function $f$ must be zero. The image by such a linear
functional is a linear combination of values of $f$, $\partial_{1,0}f$ and
$\partial_{0,1}f$ at the vertices of the set $S:=D_{0,0}\cup D_{1,1}\cup
D_{1,0}\cup D_{0,-1}\cup D_{-1,-1}\cup D_{-1,0}\cup D_{0,1}$. The coefficients
of such a linear combination must be zero, giving rise to linear equations
that must be satisfied.

\begin{proposition}
The problem of finding a quasi-interpolation operator
$\mathcal{Q}$ exact on $\pp_{2}$ and such that $Qf \in S_3^1\left( \Delta_3\right) $ is defined by means of BB-coefficients given in
(\ref{Vijmod})-(\ref{Bijtilde}) has a 5-parametric family of solutions.
\end{proposition}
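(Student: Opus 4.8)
The plan is to convert the two requirements---exactness on $\pp_{2}$ and membership $Qf\in S_3^1(\Delta_3)$---into one finite system of linear equations in the entries of the masks introduced in (\ref{Vijmod})--(\ref{Bijtilde}), and then to show that its affine solution set has dimension five. Since $\Delta_3$ is uniform and the masks act identically at every vertex, it suffices to reason on the reference hexagon $H_{0,0}$; the unknowns are then the $9\times 3\times 7=189$ scalar entries of the masks attached to the nine domain points of $D_{0,0}$.

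First I would encode exactness. Because $Qf=f$ for $f\in\pp_{2}$ amounts to the equality of all BB-coefficients, it is enough that each coefficient in (\ref{Vijmod})--(\ref{Bijtilde}) reproduce, on $\pp_{2}$, the true cubic BB-coefficient of $f$ at the corresponding domain point. These target functionals are explicit: at a vertex the coefficient equals $f(v)$; at a point $u_{i,j}^{k,m}=\tfrac13\big(2v_{i,j}+v_{i+k,j+m}\big)$, lying one third of the way from $v_{i,j}$ towards $v_{i+k,j+m}$, it equals $f(v_{i,j})+\tfrac13\nabla f(v_{i,j})\cdot(v_{i+k,j+m}-v_{i,j})$; and at the two barycenters it is the value prescribed by degree elevation of the quadratic Bernstein form of $f$. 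Evaluating each identity on the basis $\{1,x,y,x^2,xy,y^2\}$ yields $9\times 6=54$ linear equations on the mask entries.

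Next I would encode the regularity. As observed above, the $C^{0}$ conditions (\ref{C0}) hold automatically, so only (\ref{C1}), packaged in the nine scalar identities (\ref{eqsC1}), must be imposed. Substituting (\ref{Vijmod})--(\ref{Bijtilde}) into (\ref{eqsC1}) and requiring each identity to hold for every $f\in C^{1}$ forces the coefficient of each data functional $f(v)$, $\partial_{1,0}f(v)$, $\partial_{0,1}f(v)$, with $v$ ranging over the support $S$, to cancel on both sides; this produces a homogeneous block of equations. Stacking the inhomogeneous exactness block and this homogeneous smoothness block gives a single linear system, which I would solve by elimination: exhibiting one particular solution together with five linearly independent solutions of the homogeneous system proves that the family is affine of dimension five.

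The genuine difficulty is not conceptual but lies in the size and bookkeeping of this system---in particular writing down the degree-elevation targets at the barycenters correctly and expanding (\ref{eqsC1}) over every data functional---and then certifying that the combined rank is exactly $189-5$, so that the solution set is simultaneously nonempty and five-dimensional. Exploiting the symmetries of the three-direction mesh, which identify many mask entries, collapses the system to a much smaller equivalent one and makes the final rank computation, hence the count of five free parameters, transparent.
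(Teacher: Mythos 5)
Your proposal follows essentially the same route as the paper: exactness on $\pp_{2}$ is imposed as equality of the BB-coefficients of $Qm_{\mu}$ with the degree-elevated cubic BB-coefficients of $m_{\mu}$, $|\mu|\le 2$ (your explicit vertex, edge-point and barycenter targets are correct), the $C^{1}$ requirement is the homogeneous block obtained from (\ref{eqsC1}), and the combined linear system in the mask entries is solved to exhibit the five-dimensional affine solution set. The paper carries out that final elimination with a Computer Algebra System and identifies the free parameters as $\alpha_{0,0,2}$, $\alpha_{1,0,2}$, $\alpha_{0,1,2}$, $\alpha_{0,0,3}$, $\alpha_{1,0,3}$, which is the same computation you defer to rank bookkeeping.
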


\begin{proof}
The solution of the problem is found by solving the system of equations provided by the $C^{1}$ smoothness and those resulting from imposing equality between each of the BB-coefficients of $Qm_{\mu}$ and the corresponding one of $m_{\mu}$, $\left\vert \mu\right\vert \leq2$, at each of the triangles $T_{0,0}$ and $\widetilde{T}_{0,0}$, where $m_{\mu}\left(  x,y\right)  :=x^{\mu_{1}}y^{\mu_{2}}$. This solution is calculated by means of a Computer Algebra System. The parameters are $\alpha_{0,0,2}$, $\alpha_{1,0,2}$, $\alpha_{0,1,2}$, $\alpha_{0,0,3}$ and $\alpha_{1,0,3}$.
\end{proof}

\section{Choice of parameters}\label{parameters}

The existence of degrees of freedom makes it possible to construct
quasi-interpolants with additional properties. The
exactness on $\pp_{2}$ of the operator $\mathcal{Q}^{\ast}$ provided by
the above proposition implies that for each triangle $T$ induced by the triangulation $\Delta_3$ the quasi-interpolation error $\left\Vert f-\mathcal{Q}^{\ast}\left[  f\right]  \right\Vert _{C^{1},T}$ relative to $T$ is of order $\mathcal{O}\left(h^{3}\right)$, {where $\left\Vert f\right\Vert _{C^{1},T}:=\left\Vert f\right\Vert _{\infty
.T}+h\left\Vert \partial_{1,0}f\right\Vert _{\infty,T}+h\left\Vert
\partial_{0,1}f\right\Vert _{\infty,T}$} (see,
e.g. \cite{DeVoreLorentz1993}). {More precisely,%

\[
\left\Vert f-\mathcal{Q}^{\ast}\left[  f\right]  \right\Vert _{C^{1},T}%
\leq\left(  1+\left\Vert \mathcal{Q}^{\ast}\right\Vert _{C^{1}}\right)
\operatorname*{dist}\nolimits_{C^{1},T}\left(  f,\mathbb{P}_{2}\right),
\]
with $\operatorname*{dist}\nolimits_{C^{1},T}\left(  f,\mathbb{P}_{2}\right)
=\inf_{p\in\mathbb{P}_{2}}\left\Vert f-p\right\Vert _{C^{1},T}$.
}
It is straightforward to prove that
\begin{align*}
\left\Vert \mathcal{Q}^{\ast}\right\Vert _{C^{1}}  & \leq\max\left\{
\left\Vert \alpha_{0,0}\right\Vert _{1}+\left\Vert \alpha_{1,0}\right\Vert
_{1}+\left\Vert \alpha_{0,1}\right\Vert _{1},\right.  \\
& \left.  \left\Vert \gamma_{0,0}\right\Vert _{1}+\left\Vert \gamma
_{1,0}\right\Vert _{1}+\left\Vert \gamma_{0,1}\right\Vert _{1},\left\Vert
\widetilde{\gamma}_{0,0}\right\Vert _{1}+\left\Vert \widetilde{\gamma}%
_{1,0}\right\Vert _{1}+\left\Vert \widetilde{\gamma}_{0,1}\right\Vert
_{1};\right.  \\
& \left.  \left\Vert \beta_{0,0}^{k,m}\right\Vert _{1}+\left\Vert \beta
_{1,0}^{k,m}\right\Vert _{1}+\left\Vert \beta_{0,1}^{k,m}\right\Vert
_{1},\ k,m\in\left\{  -1,0,1\right\}  ,\ k+m\neq0\right\},
\end{align*}
where $\left\Vert v\right\Vert _{1}:=\sum_{\ell=1}^{N}\left\vert v_{\ell
}\right\vert $ for $v\in\mathbb{R}^{N}$.

It is not possible to achieve a higher order of global convergence than above, but it is feasible to attain higher orders of convergence at specific points, obtaining quasi-interpolants that are often called super-convergent. Specifically, we will ask
that the quasi-interpolation error 
$$\varepsilon\left[  f\right]  \left(
q\right)  :=f\left(  q\right)  -\mathcal{Q}^{\ast}\left[  f\right]  \left(
q\right)  
$$ be of order greater than or equal to four at the midpoints of the edges of the triangulation.

\begin{proposition}\label{superconvergencia}
It is satisfied that the quasi-interpolation error $\varepsilon\left( f\right) $ is of order four at the midpoints of the sides of $\Delta_{3}$ if and only if%
\begin{align*}
\alpha_{0,0,2}  & =\lambda,\alpha_{0,0,3}=\frac{1}{6}\left(  -5+12\lambda
\right)  ,\alpha_{1,0,2}=\frac{{1}}{36}\left(  1-18\lambda\right)
,\alpha_{1,0,3}=\frac{{1}}{12}\left(  5-18\lambda\right)  ,\\
\alpha_{0,1,2}  & =-\frac{h}{9},
\end{align*}
$\lambda$ being an arbitrary value.
\end{proposition}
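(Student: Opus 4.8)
The plan is to turn the pointwise super-convergence requirement into a finite linear system in the five free parameters and then to solve that system. First I would use the two facts already available: by the preceding proposition $\mathcal{Q}^{\ast}$ reproduces $\pp_2$, and its $C^1$-operator norm is bounded independently of $h$. Fix a midpoint $q$ of an edge and expand $f$ in a Taylor polynomial about $q$, writing $f=p_2+p_3+R$, where $p_2\in\pp_2$ is the quadratic part, $p_3$ collects the homogeneous cubic terms, and $R$ is $\mathcal{O}(h^4)$ on the $\mathcal{O}(h)$-diameter support of $\mathcal{Q}^{\ast}$ near $q$. Since $\varepsilon[p_2]\equiv 0$ and $\mathcal{Q}^{\ast}$ is bounded, one gets $\varepsilon[f](q)=\varepsilon[p_3](q)+\mathcal{O}(h^4)$. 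Hence $\varepsilon[f](q)=\mathcal{O}(h^4)$ for every smooth $f$ \emph{if and only if} $\varepsilon[m_\mu](q)=0$ for each cubic monomial $m_\mu$ with $|\mu|=3$, the lower-degree monomials being absorbed by $\pp_2$-exactness.

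Next I would use the uniformity of $\Delta_3$ to reduce the test points to a finite list. Every edge of $\Delta_3$ is a lattice translate of one of the three edges issuing from $v_{0,0}$, in the directions $e_1$, $e_2$ and $e_3=e_1+e_2$, and both the mesh and the masks (\ref{Vijmod})--(\ref{Bijtilde}) are invariant under these translations. Because a translation sends a cubic monomial to a cubic polynomial whose homogeneous top part is again $m_\mu$, and the lower-order remainder is annihilated by $\pp_2$-exactness, the value $\varepsilon[m_\mu]$ is the same at every midpoint of a given edge direction. Thus it suffices to impose the vanishing conditions at the three representative midpoints $\tfrac12(v_{0,0}+v_{1,0})$, $\tfrac12(v_{0,0}+v_{0,1})$ and $\tfrac12(v_{0,0}+v_{1,1})$, and for the four monomials $x^3,x^2y,xy^2,y^3$, giving at most twelve scalar equations in the five parameters.

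Then I would compute each $\varepsilon[m_\mu](q)=m_\mu(q)-Qm_\mu(q)$ explicitly. Evaluating $Qm_\mu(q)$ uses only the Bernstein--B\'ezier representation (\ref{Q_T}) on the one or two triangles containing $q$, together with the mask formulas, into which the four-parameter family found in the previous proposition is substituted; every BB-coefficient is an explicit linear combination of the values and first derivatives of $m_\mu$ at the seven vertices of the relevant hexagon, so each $\varepsilon[m_\mu](q)$ is an explicit expression in $h$ and in the parameters $\alpha_{0,0,2},\alpha_{1,0,2},\alpha_{0,1,2},\alpha_{0,0,3},\alpha_{1,0,3}$. Setting these expressions to zero (the common power of $h$ factoring out) produces the linear constraints, which I would reduce and solve with a Computer Algebra System, reading off the solution set and matching it to the stated closed forms.

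The step I expect to be decisive, rather than merely tedious, is verifying that the twelve candidate equations collapse to \emph{exactly four} independent constraints, so that precisely one free parameter survives. The cancellations responsible are where the geometry of the three-direction mesh and the $C^1$ relations (\ref{eqsC1}) interact; it is through these that the three monomials $\alpha_{0,0,3},\alpha_{1,0,2},\alpha_{1,0,3}$ become affine functions of $\lambda=\alpha_{0,0,2}$ while $\alpha_{0,1,2}$ is pinned to $-\tfrac{h}{9}$. Confirming that the rank is four, and hence that the surviving freedom is the single parameter $\lambda$, is the crux of the argument; once this is checked, the explicit formulas of the statement follow directly from the reduced system.
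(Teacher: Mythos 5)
Your proposal is correct and follows essentially the same route as the paper: uniformity (translation invariance) reduces the check to the three representative midpoints, $\mathbb{P}_2$-exactness reduces it to the vanishing of $\varepsilon\left[m_{\mu}\right]$ for $\left\vert\mu\right\vert=3$, and the explicit linear expressions in the five parameters, computed from the BB-form via de Casteljau and solved with a CAS, yield four independent conditions and hence the stated one-parameter family. The paper simply writes out those four conditions at the midpoint $e_{0,0}^{1,1}$ and observes that the other two midpoints impose nothing new, which is exactly the rank collapse you identify as the crux (only a minor slip on your side: the preceding proposition gives a five-parameter, not four-parameter, family, as you yourself use elsewhere in the argument).
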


\begin{proof}
The midpoints of the triangulation edges are the midpoints $e_{i,j}^{k,\ell}$ of the edges $\left[  v_{i,j},v_{i+k,j+\ell}\right]
$, $k,\ell\in\left\{  0,1\right\}  $, $k+\ell\neq0$, $i,j\in \mathbb{Z}$.
Since the triangulation is uniform, it is sufficient to prove the claim for the midpoints of the triangle $T_{0,0}$. The exactness of $\mathcal{Q}^{\ast}$ on $\mathbb{P}_{2}$ implies that the linear functional $\varepsilon$ is null on this space.
The value of $\mathcal{Q}^{\ast}\left[  f\right]  $ on the triangle $T_{0,0}$ is determined from the BB-coefficients%
\[
\left\{  V_{0,0},U_{0,0}^{1,1},U_{0,0}^{1,0},U_{1,1}^{-1,-1},T_{0,0}%
,U_{1,0}^{-1,0},V_{1,1},U_{1,1}^{0,-1},U_{1,0}^{0,1},V_{1,0}\right\}  ,
\]
which are defined from the masks and the values of $f$,
$\partial_{1,0}f$ and $\partial_{0,1}f$ at the vertices in $S$.
The de Casteljau algorithm allows to easily calculate the value of the
quasi-interpolant of each of the $m_{\mu}$ cubic monomials, $\left\vert
\mu\right\vert =3$, at the midpoints $e_{0,0}^{1,1}$, $e_{0,0}^{0,1}$ and
$e_{1,0}^{0,1}$ lying in the edges $\left[  v_{0,0},v_{1,1}\right]  $, $\left[
v_{0,0},v_{1,0}\right]  $ and $\left[  v_{1,0},v_{1,1}\right]  $, whose barycentric coordinates are $\left(  \frac{1}{2},\frac{1}{2},0\right)  $,
$\left(  \frac{1}{2},0,\frac{1}{2}\right)  $ and $\left(  0,\frac{1}{2},\frac
{1}{2}\right)  $, respectively.
Regarding the midpoint $e_{0,0}^{1,1}$, the following results hold:%
\begin{align*}
\varepsilon\left[  m_{3,0}\right]  \left(  e_{0,0}^{1,1}\right)    &
=\frac{h^{3}}{4}\left(  12\alpha_{0,0,2}-6\alpha_{0,0,3}-5\right)  ,\\
\varepsilon\left[  m_{2,1}\right]  \left(  e_{0,0}^{1,1}\right)    &
=-\frac{h^{2}}{4}\left(  4h\alpha_{0,0,2}+24\alpha_{1,0,2}-2h\alpha
_{0,0,3}-8\alpha_{1,0,3}+h\right)  ,\\
\varepsilon\left[  m_{1,2}\right]  \left(  e_{0,0}^{1,1}\right)    &
=\frac{h^{2}}{108}\left(  540h\alpha_{0,0,2}+288\alpha_{1,0,2}+864\alpha
_{0,1,2}+18h\alpha_{0,0,3}\right.  \\
& \left.  +288\alpha_{1,0,3}-17h\right)  ,\\
\varepsilon\left[  m_{0,3}\right]  \left(  e_{0,0}^{1,1}\right)    &
=\frac{h^{2}}{36}\left(  108h\alpha_{0,0,2}+72\alpha_{1,0,2}+18h\alpha
_{0,0,3}+72\alpha_{1,0,3}-17h\right)  .
\end{align*}
One will have superconvergence at $e_{0,0}^{1,1}$ if and only if the above expressions are equal to zero, which is equivalent to the claim in the statement.
It is straightforward to check that these same conditions guarantee superconvergence at $e_{0,0}^{0,1}$ and $e_{1,0}^{0,1}$.
\end{proof}

Figures \ref{mascaras1parametrovertices} and
\ref{mascaras1parametrobaricentros} show the masks provided by Proposition
\ref{superconvergencia} {for vertices and barycenters, respectively.}

\begin{figure}[ptb]
\centering
%% 1
\begin{tikzpicture}[scale=0.75]
\node[red] at (0.2,1.6)  {$\alpha_{0,0}$};
\draw[gray,thin]  (0,1)--(1,0) --(3,0) --(4,1) --(3,2)--(1,2)--cycle; %perimetro%
\draw[gray, ultra thin,dashed]  (0,1)--(4,1);
\draw[gray, ultra thin,dashed]  (1,0)--(3,2);
\draw[gray, ultra thin,dashed]   (1,2)--(3,0);
%%%%%%%%%%%%%%%%%
\draw[fill,black] (1,0) circle(0.04cm);
\node[black] at (1,-0.2)  {\footnotesize {$-\frac{1}{6}\left(  7+12\lambda\right)  $} };
\draw[fill,black] (3,0) circle(0.04cm);
\node[black] at (3,-0.2)  {\footnotesize {$1-\lambda$} };
\draw[fill,black] (0,1) circle(0.04cm);
\node[black] at (-0.25,1)  {\footnotesize {$-\frac{2}{3}$} };
\draw[fill,black] (2,1) circle(0.04cm);
\node[black] at (2.,1.25)  {\footnotesize {$\frac{1}{3}$} };
\draw[fill,black] (4,1) circle(0.04cm);
\node[black] at (4.,1.2)  {\footnotesize {$0$} };
\draw[fill,black] (1,2) circle(0.04cm);
\node[black] at (1.,2.2)  {\footnotesize {$\frac{1}{6}\left(  12\lambda-5\right)  $} };
\draw[fill,black] (3,2) circle(0.04cm);
\node[black] at (3.,2.2)  {\footnotesize {$\lambda$}};
\end{tikzpicture}
%% 2
\begin{tikzpicture}[scale=0.75]
\node[red] at (0.2,1.6)  {$\alpha_{1,0}$};
\draw[gray,thin]  (0,1)--(1,0) --(3,0) --(4,1) --(3,2)--(1,2)--cycle; %perimetro%
\draw[gray, ultra thin,dashed]  (0,1)--(4,1);
\draw[gray, ultra thin,dashed]  (1,0)--(3,2);
\draw[gray, ultra thin,dashed]   (1,2)--(3,0);
%%%%%%%%%%%%%%%%%
\draw[fill,black] (1,0) circle(0.04cm);
\node[black] at (0.8,-0.2)  {\footnotesize {$\frac{1}{12}(18\lambda-13)$} };
\draw[fill,black] (3,0) circle(0.04cm);
\node[black] at (3.2,-0.2)  {\footnotesize {$\frac{1}{36}(18\lambda-17)$} };
\draw[fill,black] (0,1) circle(0.04cm);
\node[black] at (-0.25,1)  {\footnotesize {$-\frac{2}{9}$} };
\draw[fill,black] (2,1) circle(0.04cm);
\node[black] at (2.,1.25)  {\footnotesize {$-\frac{2}{3}$} };
\draw[fill,black] (4,1) circle(0.04cm);
\node[black] at (4.,1.2)  {\footnotesize {$0$} };
\draw[fill,black] (1,2) circle(0.04cm);
\node[black] at (0.8,2.2)  {\footnotesize {$\frac{1}{12}(5-18\lambda)$} };
\draw[fill,black] (3,2) circle(0.04cm);
\node[black] at (3.2,2.2)  {\footnotesize {$\frac{1}{36}(1-18\lambda)$}};
\end{tikzpicture}
%% 3
\begin{tikzpicture}[scale=0.75]
\node[red] at (0.2,1.6)  {$\alpha_{0,1}$};
\draw[gray,thin]  (0,1)--(1,0) --(3,0) --(4,1) --(3,2)--(1,2)--cycle; %perimetro%
\draw[gray, ultra thin,dashed]  (0,1)--(4,1);
\draw[gray, ultra thin,dashed]  (1,0)--(3,2);
\draw[gray, ultra thin,dashed]   (1,2)--(3,0);
%%%%%%%%%%%%%%%%%
\draw[fill,black] (1,0) circle(0.04cm);
\node[black] at (1,-0.2)  {\footnotesize {$\frac{1}{18}(13-18\lambda)$} };
\draw[fill,black] (3,0) circle(0.04cm);
\node[black] at (3,-0.2)  {\footnotesize {$\frac{1}{9}$} };
\draw[fill,black] (0,1) circle(0.04cm);
\node[black] at (0,1.25)  {\footnotesize {$1-2\lambda$} };
\draw[fill,black] (2,1) circle(0.04cm);
\node[black] at (2.,1.25) {\footnotesize {$1-2\lambda$} };
\draw[fill,black] (4,1) circle(0.04cm);
\node[black] at (4.,1.2)  {\footnotesize {$0$} };
\draw[fill,black] (1,2) circle(0.04cm);
\node[black] at (1.3,2.2)  {\footnotesize {$\frac{1}{18}(5-18\lambda)$} };
\draw[fill,black] (3,2) circle(0.04cm);
\node[black] at (3.2,2.2)  {\footnotesize {$-\frac{1}{9}$}};
\end{tikzpicture}
\caption{Masks associated with vertices for achieving $C^{1}$ smoothness and
superconvergence at midpoints of edges (and also at vertices), and ensuring
exactness on $\pp_{2}$.}%
\label{mascaras1parametrovertices}%
\end{figure}
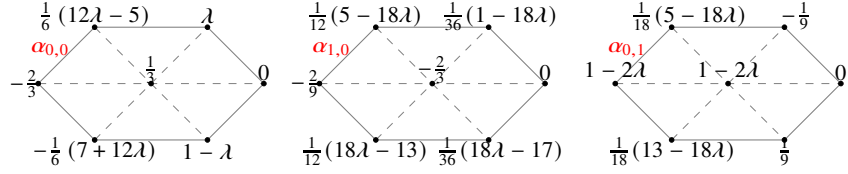

\begin{figure}[ptb]
\centering
%% 1
\begin{tikzpicture}[scale=0.75]
\node[red] at (0.2,1.6)  {$\gamma_{0,0}$};
\draw[gray,thin]  (0,1)--(1,0) --(3,0) --(4,1) --(3,2)--(1,2)--cycle; %perimetro%
\draw[gray, ultra thin,dashed]  (0,1)--(4,1);
\draw[gray, ultra thin,dashed]  (1,0)--(3,2);
\draw[gray, ultra thin,dashed]   (1,2)--(3,0);
%%%%%%%%%%%%%%%%%
\draw[fill,black] (1,0) circle(0.04cm);
\node[black] at (1,-0.2)  {\footnotesize {$0 $} };
\draw[fill,black] (3,0) circle(0.04cm);
\node[black] at (3,-0.2)  {\footnotesize {$1-\lambda$} };
\draw[fill,black] (0,1) circle(0.04cm);
\node[black] at (-0.25,1)  {\footnotesize {$0$} };
\draw[fill,black] (2,1) circle(0.04cm);
\node[black] at (2.,1.25)  {\footnotesize {$\frac{1}{3}\left(  2-9\lambda\right)  $} };
\draw[fill,black] (4,1) circle(0.04cm);
\node[black] at (4.,1.2)  {\footnotesize {$1-\lambda$} };
\draw[fill,black] (1,2) circle(0.04cm);
\node[black] at (1.,2.2)  {\footnotesize {$\frac{1}{6}\left(  6\lambda-5\right)  $} };
\draw[fill,black] (3,2) circle(0.04cm);
\node[black] at (3.,2.2)  {\footnotesize {$\frac{1}{6}\left(  24\lambda-5\right)  $}};
\end{tikzpicture}
%% 2
\begin{tikzpicture}[scale=0.75]
\node[red] at (0.2,1.6)  {$\gamma_{1,0}$};
\draw[gray,thin]  (0,1)--(1,0) --(3,0) --(4,1) --(3,2)--(1,2)--cycle; %perimetro%
\draw[gray, ultra thin,dashed]  (0,1)--(4,1);
\draw[gray, ultra thin,dashed]  (1,0)--(3,2);
\draw[gray, ultra thin,dashed]   (1,2)--(3,0);
%%%%%%%%%%%%%%%%%
\draw[fill,black] (1,0) circle(0.04cm);
\node[black] at (1,-0.2)  {\footnotesize {$0 $} };
\draw[fill,black] (3,0) circle(0.04cm);
\node[black] at (3,-0.2)  {\footnotesize {$\frac{1}{36}\left(  18\lambda-17\right)  $} };
\draw[fill,black] (0,1) circle(0.04cm);
\node[black] at (-0.25,1)  {\footnotesize {$0$} };
\draw[fill,black] (2,1) circle(0.04cm);
\node[black] at (1.5,1.25)  {\footnotesize {$\frac{1}{12}\left( 30\lambda-23\right)  $} };
\draw[fill,black] (4,1) circle(0.04cm);
\node[black] at (3.9,1.2)  {\footnotesize {$\frac{1}{36}\left(  18\lambda-17\right)  $} };
\draw[fill,black] (1,2) circle(0.04cm);
\node[black] at (0.8,2.2)  {\footnotesize {$\frac{1}{18}\left(  7-18\lambda\right)  $} };
\draw[fill,black] (3,2) circle(0.04cm);
\node[black] at (3.2,2.2)  {\footnotesize {$\frac{1}{36}\left(  17-90\lambda\right)  $}};
\end{tikzpicture}
%% 3
\begin{tikzpicture}[scale=0.75]
\node[red] at (0.2,1.6)  {$\gamma_{0,1}$};
\draw[gray,thin]  (0,1)--(1,0) --(3,0) --(4,1) --(3,2)--(1,2)--cycle; %perimetro%
\draw[gray, ultra thin,dashed]  (0,1)--(4,1);
\draw[gray, ultra thin,dashed]  (1,0)--(3,2);
\draw[gray, ultra thin,dashed]   (1,2)--(3,0);
%%%%%%%%%%%%%%%%%
\draw[fill,black] (1,0) circle(0.04cm);
\node[black] at (1.,-0.2)  {\footnotesize {$0 $} };
\draw[fill,black] (3,0) circle(0.04cm);
\node[black] at (3,-0.3)  {\footnotesize {$\frac{1}{9} $} };
\draw[fill,black] (0,1) circle(0.04cm);
\node[black] at (0.3,1)  {\footnotesize {$0$} }; %% -0.25
\draw[fill,black] (2,1) circle(0.04cm);
\node[black] at (2.,1.25)  {\footnotesize {$\frac{1}{3}\left( 7-12\lambda\right)  $} };
\draw[fill,black] (4,1) circle(0.04cm);
\node[black] at (4.,1.3)  {\footnotesize {$\frac{1}{9}$} };
\draw[fill,black] (1,2) circle(0.04cm);
\node[black] at (0.9,2.2)  {\footnotesize {$\frac{1}{18}\left(  7-18\lambda\right)  $} };
\draw[fill,black] (3,2) circle(0.04cm);
\node[black] at (3.1,2.2)  {\footnotesize {$\frac{1}{18}\left(  1-18\lambda\right)  $}};
\end{tikzpicture}
%% 4
\begin{tikzpicture}[scale=0.75]
\node[red] at (0.2,1.6)  {$\widetilde{\gamma}_{0,0}$};
\draw[gray,thin]  (0,1)--(1,0) --(3,0) --(4,1) --(3,2)--(1,2)--cycle; %perimetro%
\draw[gray, ultra thin,dashed]  (0,1)--(4,1);
\draw[gray, ultra thin,dashed]  (1,0)--(3,2);
\draw[gray, ultra thin,dashed]   (1,2)--(3,0);
%%%%%%%%%%%%%%%%%
\draw[fill,black] (1,0) circle(0.04cm);
\node[black] at (0.9,-0.3)  {\footnotesize {$\frac{1}{6}\left(  1-6\lambda\right)  $} };
\draw[fill,black] (3,0) circle(0.04cm);
\node[black] at (3,-0.3)  {\footnotesize {$\frac{1}{6}\left(  19-24\lambda\right)  $} };
\draw[fill,black] (0,1) circle(0.04cm);
\node[black] at (-0.25,1)  {\footnotesize {$0$} };
\draw[fill,black] (2,1) circle(0.04cm);
\node[black] at (2.,1.25)  {\footnotesize {$\frac{1}{3}\left( 9\lambda-7\right)  $} };
\draw[fill,black] (4,1) circle(0.04cm);
\node[black] at (4.,1.2)  {\footnotesize {$\lambda$} };
\draw[fill,black] (1,2) circle(0.04cm);
\node[black] at (1.,2.2)  {\footnotesize {$0 $} };
\draw[fill,black] (3,2) circle(0.04cm);
\node[black] at (3.,2.2)  {\footnotesize {$\lambda $}};
\end{tikzpicture}
%% 5
\begin{tikzpicture}[scale=0.75]
\node[red] at (0.2,1.6)  {$\widetilde{\gamma}_{1,0}$};
\draw[gray,thin]  (0,1)--(1,0) --(3,0) --(4,1) --(3,2)--(1,2)--cycle; %perimetro%
\draw[gray, ultra thin,dashed]  (0,1)--(4,1);
\draw[gray, ultra thin,dashed]  (1,0)--(3,2);
\draw[gray, ultra thin,dashed]   (1,2)--(3,0);
%%%%%%%%%%%%%%%%%
\draw[fill,black] (1,0) circle(0.04cm);
\node[black] at (0.9,-0.3)  {\footnotesize {$\frac{1}{18}\left(  18\lambda-11\right)  $} };
\draw[fill,black] (3,0) circle(0.04cm);
\node[black] at (3.2,-0.3)  {\footnotesize {$\frac{1}{36}\left( 90\lambda-73\right)  $} };
\draw[fill,black] (0,1) circle(0.04cm);
\node[black] at (-0.25,1)  {\footnotesize {$0$} };
\draw[fill,black] (2,1) circle(0.04cm);
\node[black] at (1.8,1.25)  {\footnotesize {$\frac{1}{12}\left( 7-30\lambda\right)  $} };
\draw[fill,black] (4,1) circle(0.04cm);
\node[black] at (4.,1.2)  {\footnotesize {$\frac{1}{36}\left( 1-18\lambda\right)  $} };
\draw[fill,black] (1,2) circle(0.04cm);
\node[black] at (1.,2.2)  {\footnotesize {$0 $} };
\draw[fill,black] (3,2) circle(0.04cm);
\node[black] at (3.,2.2)  {\footnotesize {$\frac{1}{36}\left( 1-18\lambda\right)  $}};
\end{tikzpicture}
%% 6
\begin{tikzpicture}[scale=0.75]
\node[red] at (0.2,1.6)  {$\widetilde{\gamma}_{0,1}$};
\draw[gray,thin]  (0,1)--(1,0) --(3,0) --(4,1) --(3,2)--(1,2)--cycle; %perimetro%
\draw[gray, ultra thin,dashed]  (0,1)--(4,1);
\draw[gray, ultra thin,dashed]  (1,0)--(3,2);
\draw[gray, ultra thin,dashed]   (1,2)--(3,0);
%%%%%%%%%%%%%%%%%
\draw[fill,black] (1,0) circle(0.04cm);
\node[black] at (0.8,-0.2)  {\footnotesize {$\frac{1}{18}\left(  11-18\lambda\right)  $} };
\draw[fill,black] (3,0) circle(0.04cm);
\node[black] at (3,-0.2)  {\footnotesize {$\frac{1}{18}\left( 17-18\lambda\right)  $} };
\draw[fill,black] (0,1) circle(0.04cm);
\node[black] at (-0.25,1)  {\footnotesize {$0$} };
\draw[fill,black] (2,1) circle(0.04cm);
\node[black] at (2.,1.25)  {\footnotesize {$\frac{1}{3}\left( 5-12\lambda\right)  $} };
\draw[fill,black] (4,1) circle(0.04cm);
\node[black] at (4.,1.2)  {\footnotesize {$-\frac{1}{9}  $} };
\draw[fill,black] (1,2) circle(0.04cm);
\node[black] at (1.,2.2)  {\footnotesize {$0 $} };
\draw[fill,black] (3,2) circle(0.04cm);
\node[black] at (3.,2.2)  {\footnotesize {$-\frac{1}{9}  $}};
\end{tikzpicture}
\caption{Masks associated with barycenters.}%
\label{mascaras1parametrobaricentros}%
\end{figure}
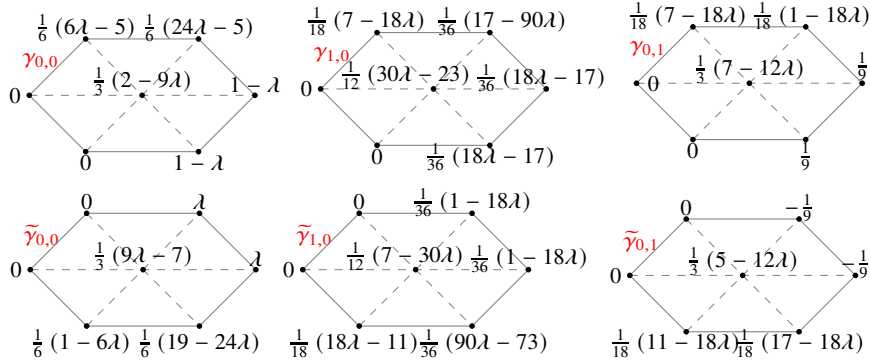

Next, the maks for $U_{i,j}^{k,m}$. Firstly, those associated with the values
of $f$ at the vertices:%
\begin{align*}
\beta_{0,0}^{1,1}  &  =\left(  -\frac{1}{3},0,2\lambda,\frac{1}{6}\left(
6\lambda-5\right)  ,0,\frac{1}{6}\left(  1-6\lambda\right)  ,2\left(
1-\lambda\right)  \right)  ,\\
\beta_{0,0}^{1,0}  &  =\left(  \frac{3}{2}\left(  1-2\lambda\right)
,0,2\lambda,\frac{1}{3}\left(  9\lambda-5\right)  ,\frac{1}{6}\left(
1-6\lambda\right)  ,0,1-\lambda\right) \\
\beta_{0,0}^{0,-1}  &  =\left(  \frac{1}{6}\left(  13-18\lambda\right)
,0,\lambda,\frac{1}{3}\left(  12\lambda-5\right)  ,-\frac{1}{2}\left(
2\lambda+1\right)  ,1-\lambda,0\right)  ,\\
\beta_{0,0}^{-1,-1}  &  =\left(  1,0,0,\frac{1}{6}\left(  18\lambda-5\right),
-\frac{4}{3},\frac{1}{6}\left(  13-18\lambda\right)  ,0\right)  ,\\
\beta_{0,0}^{-1,0}  &  =\left(  \frac{1}{6}\left(  18\lambda-5\right)
,0,0,\lambda,\frac{1}{2}\left(  2\lambda-3\right)  ,\frac{1}{3}\left(
7-12\lambda\right)  ,1-\lambda\right) \\
\beta_{0,0}^{0,1}  &  =\left(  \frac{3}{2}\left(  2\lambda-1\right)
,0,\lambda,0,\frac{1}{6}\left(  6\lambda-5\right)  ,\frac{1}{3}\left(
4-9\lambda\right)  ,2\left(  1-\lambda\right)  \right)  .
\end{align*}
Regarding the masks linked to $\partial_{1,0}f$, the following masks were
obtained:%
\begin{align*}
\beta_{1,0}^{1,1}  &  =\left(  -\frac{8}{9},0,\frac{1-18\lambda}{18},\frac{7-18\lambda}{18},0,\frac{18\lambda-11}{18},\frac{18\lambda-17}{18}\right)
,\\
\beta_{1,0}^{1,0}  &  =\left(  \frac{2\left(  9\lambda-8\right) }{9}%
,0,\frac{1-18\lambda}{18},\frac{29-90\lambda}{36},\frac{18\lambda-1}{18},0,\frac{18\lambda-17}{36}\right)  ,\\
\beta_{1,0}^{0,-1}  &  =\left(  \frac{2\left(  9\lambda-7\right)}%
{9},0,\frac{1-18\lambda}{36},\frac{5-18\lambda}{6},\frac{6\lambda-5}{6},\frac{18\lambda-17}{36},0\right)  ,\\
\beta_{1,0}^{-1,-1}  &  =\left(  -\frac{4}{9},0,0,\frac{2\left(
2-9\lambda\right)}{9},-\frac{4}{9},\frac{2\left(  9\lambda-7\right) }%
{9},0\right)  ,\\
\beta_{1,0}^{-1,0}  &  =\left(  \frac{2\left(  2-9\lambda\right) }%
{9},0,0,\frac{1-18\lambda}{36},\frac{1-6\lambda}{6},\frac{18\lambda-13}{6},\frac{18\lambda-17}{36}\right)  ,\\
\beta_{1,0}^{0,1}  &  =\left(  \frac{2\left(  1-9\lambda\right)}{9}%
,0,\frac{1-18\lambda}{36},0,\frac{7-18\lambda}{18},\frac{90\lambda-61}{36},\frac{18\lambda-17}{18}\right)  .
\end{align*}

Finally, the masks associated with $\partial_{0,1}f$ are%
\begin{align*}
\beta_{0,1}^{1,1}  &  =\left(  2\left(  1-2\lambda\right)  ,0,-\frac{2}%
{9},\frac{7-18\lambda}{18},0,\frac{11-18\lambda}{18},\frac{2}{9}\right)  ,\\
\beta_{0,1}^{1,0}  &  =\left(  \frac{11-18\lambda}{6},0,-\frac{2}{9},\frac{2\left(  1-3\lambda\right) }{3},\frac{11-18\lambda}{18},0,\frac{1}{9}\right)  ,\\
\beta_{0,1}^{0,-1}  &  =\left(  \frac{5-6\lambda}{6},0,-\frac{1}{9},\frac{5-18\lambda}{9},\frac{29-54\lambda}{18},\frac{1}{9},0\right)  ,\\
\beta_{0,1}^{-1,-1}  &  =\left(  0,0,0,\frac{1-6\lambda}%
{6},2\left(  1-2\lambda\right)  ,\frac{5-6\lambda}{6},0\right)  ,\\
\beta_{0,1}^{-1,0}  &  =\left(  \frac{1-6\lambda}{6},0,0,-\frac{1}{9},\frac{25-54\lambda}{18},\frac{13-18\lambda}{9},\frac{1}{9}\right)  ,\\
\beta_{0,1}^{0,1}  &  =\left(  \frac{7-18\lambda}{6},0,-\frac{1}{9},0,\frac{7-18\lambda}{18},\frac{2\left(2-2\lambda\right)}{3},\frac{2}{9}\right)  .
\end{align*}

\begin{corollary}
The masks in Proposition \ref{superconvergencia} produce quasi-interpolants that are also superconvergent at the vertices.
\end{corollary}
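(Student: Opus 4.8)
The plan is to reduce the statement, exactly as in the proof of Proposition~\ref{superconvergencia}, to a finite check on cubic monomials. Since $\mathcal{Q}^{\ast}$ is exact on $\pp_{2}$, the error functional $\varepsilon$ annihilates $\pp_{2}$. Writing a smooth $f$ as its cubic Taylor polynomial plus an $O(h^{4})$ remainder on the (diameter $O(h)$) hexagon $H_{0,0}$, and using that $\mathcal{Q}^{\ast}$ has finite operator norm in the $C^{1}$ norm, one gets $\varepsilon[f](q)=\varepsilon[p_{3}](q)+O(h^{4})$, where $p_{3}$ is the homogeneous cubic part of $f$ at $q$. Hence superconvergence (order four) at a point $q$ is equivalent to $\varepsilon[m_{\mu}](q)=0$ for the four cubic monomials $m_{\mu}$, $\left\vert \mu\right\vert =3$. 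By the uniformity of $\Delta_{3}$ it suffices to verify this at the single vertex $q=v_{0,0}$.

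Next I would localise the evaluation. The barycentric coordinates of $v_{0,0}$ with respect to $T_{0,0}$ are $(1,0,0)$, so in (\ref{Q_T}) only $B_{3,0,0}^{T_{0,0}}$ is nonzero at $v_{0,0}$ (it equals $1$) and all other Bernstein polynomials vanish there. Therefore $\mathcal{Q}^{\ast}[f](v_{0,0})=V_{0,0}$ and $\varepsilon[f](v_{0,0})=f(v_{0,0})-V_{0,0}$. Moreover, for any cubic monomial we have $m_{\mu}(v_{0,0})=0$ and $\partial_{1,0}m_{\mu}(v_{0,0})=\partial_{0,1}m_{\mu}(v_{0,0})=0$ since $v_{0,0}$ is the origin; in particular the first entries of the masks (which multiply the data at $v_{0,0}$) drop out. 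Thus the whole statement collapses to verifying $V_{0,0}[m_{\mu}]=0$ for each $\left\vert \mu\right\vert =3$.

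To finish I would substitute the explicit masks $\alpha_{0,0}$, $\alpha_{1,0}$, $\alpha_{0,1}$ of Figure~\ref{mascaras1parametrovertices} into the defining relation (\ref{Vijmod}), using the coordinates $v_{i,j}=\bigl((i+j)h,(i-j)h\bigr)$ of the six neighbouring vertices $v_{1,1},v_{1,0},v_{0,-1},v_{-1,-1},v_{-1,0},v_{0,1}$, and evaluate $V_{0,0}[m_{\mu}]$ together with the corresponding gradient data $h\,\partial_{1,0}m_{\mu}$, $h\,\partial_{0,1}m_{\mu}$ at those vertices. The conceptual steps above are immediate; the substance is this finite but tedious contraction of the masks against the monomial data, which I would carry out with the same Computer Algebra System used in Proposition~\ref{superconvergencia}. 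The only point that requires genuine confirmation — and the natural place for the computation to go wrong — is that \emph{no new constraints} appear: the five–parameter conditions already imposed to obtain superconvergence at the edge midpoints must force $V_{0,0}[m_{\mu}]=0$ for all four cubic monomials simultaneously and \emph{independently of} $\lambda$. Verifying this vanishing and the independence from $\lambda$ establishes order four at $v_{0,0}$, and hence, by uniformity, at every vertex of $\Delta_{3}$.
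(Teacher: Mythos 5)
Your proposal is correct and follows essentially the same route as the paper: by exactness on $\pp_{2}$ and uniformity of $\Delta_{3}$, the claim reduces to checking $\varepsilon\left[ m_{\mu}\right]\left( v_{0,0}\right)=0$ for $\left\vert \mu\right\vert =3$, which is exactly the verification (carried out by direct computation with the masks) that the paper's proof invokes. Your additional remarks — the Taylor-remainder justification of the reduction and the observation that only $V_{0,0}$ survives at the vertex — merely make explicit what the paper leaves implicit.
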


\begin{proof}
It is sufficient to check that $\varepsilon\left[  m_{\mu}\right]  \left(  v_{0,0}\right)=0$, for $\left\vert \mu\right\vert =3$.
\end{proof}

After selecting four parameters by imposing superconvergence on the midpoints
of the sides, only one parameter remains, which is susceptible to be selected.
One possibility is to check the behaviour of the quasi-interpolation error at
the vertices of the corresponding quasi-interpolant $\mathcal{Q}_{\lambda
}^{\ast}\left[  f\right]  $. It is easy to prove that%
\[
\varepsilon\left[  m_{4,0}\right]  \left(  v_{0,0}\right)  =\varepsilon\left[
m_{0,4}\right]  \left(  v_{0,0}\right)  =-\frac{4}{3}h^{4},\varepsilon\left[
m_{2,2}\right]  \left(  v_{0,0}\right)  =\frac{4}{9}h^{4},\varepsilon\left[
m_{1,3}\right]  \left(  v_{0,0}\right)  =0
\]
and%
\[
\varepsilon\left[  m_{3,1}\right]  \left(  v_{0,0}\right)  =2h^{4}\left(
2\lambda-1\right)  .
\]
Therefore, the choice $\lambda=1/2$ produces quasi-interpolants $\mathcal{Q}%
_{1/2}^{\ast}\left[  f\right]  $ with symmetric behaviour with respect to the
errors at the vertices for the quartic monomials.

\section{Numerical tests}\label{tests}

We test the performance of the operator $\mathcal{Q}_{1/2}^{\ast}$ by
considering the classical Franke and Nielson test functions. They are
\cite{Franke,Nielson}%
\begin{align*}
f\left(  x,y\right)   &  =\frac{1}{2}\exp\left(  -\left(  (9x-7)^{2}+\frac
{1}{4}(9y-3)^{2}\right)  \right)  +\frac{3}{4}\exp\left(  -\frac{1}%
{49}(9x+1)^{2}-\frac{1}{10}(9y+1)\right) \\
&  -\frac{1}{5}\exp\left(  -(9x-4)^{2}-(9y-7)^{2}\right)  +\frac{3}{4}%
\exp\left(  -\left(  (9x-2)^{2}+(9y-2)^{2}\right)  \right)  ,\\
g\left(  x,y\right)   &  =\frac{1}{2}y\cos^{4}\left(  4\left(  x^{2}%
+y-1\right)  \right)  .
\end{align*}
Their quasi-interpolants are computed on $\Omega:=\left[  0,1\right]
\times\left[  0,1\right]  $ for which evaluations of $f$ and $g$ at points
outside $\Omega$ but close to its boundary are necessary.

Table~\ref{tests1} shows approximate values of the
$\left\Vert f-\mathcal{Q}_{1/2}^{\ast}\left[  f\right]  \right\Vert
_{\infty,\Omega}$ and $\left\Vert g-\mathcal{Q}_{1/2}^{\ast}\left[  g\right]
\right\Vert _{\infty,\Omega}$ for $h=1/n$, with $n=8,16,32,46,128$. They are
estimated from the values of the test function and its quasi-interpolant at 28 points in each triangle. It also includes the numerical approximation orders, computed as the rate
\[
\text{NCO}:=\log\left(  \frac{\mathtt{E}\left(  h_{2}\right)  }{\mathtt{E}%
\left(  h_{1}\right)  }\right)  /\log\left(  \frac{h_{2}}{h_{1}}\right)  ,
\]
where $\mathtt{E}\left(  h\right)  $ stands for the estimated error associated with the step-length $h$.

The results confirm the theoretical results regarding the performance of
$\mathcal{Q}_{1/2}^{\ast}$.

\begin{table}[ptb]
\centering%
\begin{tabular}
[c]{l|c|c|c|c|}\cline{2-5}
& \multicolumn{2}{|c|}{$f$} & \multicolumn{2}{|c|}{$g$}\\\hline
\multicolumn{1}{|c|}{$n$} & error & NCO & error & NCO\\\hline
\multicolumn{1}{|c|}{$8$} & \multicolumn{1}{|l|}{$3.624\times10^{-1}$} & $-$ &
\multicolumn{1}{|l|}{$5.258\times10^{-1}$} & $-$\\
\multicolumn{1}{|c|}{$16$} & \multicolumn{1}{|l|}{$8.836\times10^{-2}$} &
\multicolumn{1}{|l|}{$2.036$} & \multicolumn{1}{|l|}{$1.062\times10^{-1}$} &
\multicolumn{1}{|l|}{$2.307$}\\
\multicolumn{1}{|c|}{$32$} & \multicolumn{1}{|l|}{$8.742\times10^{-3}$} &
\multicolumn{1}{|l|}{$3.337$} & \multicolumn{1}{|l|}{$9.658\times10^{-3}$} &
\multicolumn{1}{|l|}{$3.459$}\\
\multicolumn{1}{|c|}{$64$} & \multicolumn{1}{|l|}{$7.303\times10^{-4}$} &
\multicolumn{1}{|l|}{$3.581$} & \multicolumn{1}{|l|}{$7.426\times10^{-4}$} &
\multicolumn{1}{|l|}{$3.701$}\\
\multicolumn{1}{|c|}{$128$} & \multicolumn{1}{|l|}{$7.550\times10^{-5}$} &
\multicolumn{1}{|l|}{$3.274$} & \multicolumn{1}{|l|}{$6.381\times10^{-5}$} &
\multicolumn{1}{|l|}{$3.541$}\\\hline
\end{tabular}
\caption{Estimations of the quasi-interpolation errors for Franke and Nielson
functions provided by the operator $\mathcal{Q}_{1/2}^{\ast}$ for $h=1/n$.}%
\label{tests1}%
\end{table}

\section{Conclusions}
In this paper, we have proposed the construction of $C^{1}$ cubic quasi-interpolants on a three-direction mesh of $\RR^{2}$. The quasi-interpolating splines have been defined by directly setting their BB-coefficients from point and gradient values in order to reproduce quadratic polynomials, the highest possible degree. The resulting spline depends on five parameters that we have fixed imposing additional properties. Finally, we have provided some numerical tests confirming the approximation properties.

\section*{Acknowledgements}
The authors would like to thank the referees for their comments, suggestions and proposed changes, which have greatly improved the original version.

The first and third authors are members of the research group FQM 191 \textit{Matem\'{a}tica Aplicada} funded by the PAIDI programme of the Junta de Andaluc\'{\i}a, Spain.

The second author is a member of the research group GNCS of Italy and acknowledges the support of the MUR Excellence Department Project awarded to the Department of Mathematics, University of Rome Tor Vergata, CUP E83C23000330006.

The fourth author is a member of the INdAM Research group GNCS of Italy and was supported by this group.

\end{document}